\newtheorem{theorem}{Theorem}[section]
\newtheorem{lemma}[theorem]{Lemma}
\newenvironment{proof}{\textbf{Proof.}}{\hfill $\Box$}
\numberwithin{equation}{section} 
\begin{document}

\setcounter{page}{1}

\title{A space-time finite element method for the eddy current
    approximation of rotating electric machines}
\author{Peter~Gangl$^1$, Mario~Gobrial$^2$, Olaf~Steinbach$^2$}
\date{$^1$Johann Radon Institute for Computational and
  Applied Mathematics, \\
Altenberger Stra{\ss}e 69, 4040 Linz, Austria \\[2mm]
$^2$Institut f\"ur Angewandte Mathematik, TU Graz, \\[1mm] 
Steyrergasse 30, 8010 Graz, Austria}

\maketitle

\begin{abstract}
  In this paper we formulate and analyze a space-time finite element
  method for the numerical simulation of rotating electric machines where
  the finite element mesh is fixed in space-time domain.
  Based on the Babu\v{s}ka--Ne\v{c}as theory we prove unique solvability
  both for the continuous variational formulation and for a standard Galerkin
  finite element discretization in the space-time domain. This approach
  allows for an adaptive resolution of the solution both in space and time,
  but it requires the solution of the overall system of algebraic equations.
  While the use of parallel solution algorithms seems to be mandatory,
  this also allows for a parallelization simultaneously in space and time.
  This approach is used for the eddy current approximation of the Maxwell
  equations which results in an elliptic-parabolic interface problem.
  Numerical results for linear and nonlinear constitutive material relations
  confirm the applicability, efficiency and accuracy of the proposed
  approach.
\end{abstract}

\section{Introduction}\label{sec:intro}
Electric machines have become an integral part of everyday life with a
large share of global electric energy being converted into mechanical energy
by electric machines. The efficient and accurate numerical simulation of
electric machines is thus an important topic in particular in order to design
new machines with high performance indicators. Mathematical models for
computing the magnetic flux density and the magnetic field inside an
electric machine are based on low-frequency approximations to Maxwell's
equations such as the magneto-quasi-static or the magneto-static
approximations. While the former accounts for eddy currents in conducting
regions and is also refered to as the eddy current approximation of
Maxwell's equations \cite{Rodrguez2010}, the more widely used magneto-static
approximation ignores these effects. Eddy currents yield thermal losses
\cite{Ida1997} and thus are typically an unwanted effect in electric machines
and therefore often counteracted, e.g., by assembling the machine from thin
laminated steel sheets. Nevertheless, these effects are still present, e.g.,
in permanent magnets or when non-laminated designs are chosen
\cite{Mellak2022}, and their accurate computation is of high relevance.
While the magneto-static approximation to Maxwell's equations for rotating
electric machines results in a sequence of independent static problems, the
eddy current approximation yields a time-dependent problem of mixed
parabolic-elliptic type \cite{Bachinger2005}. This type of problems is
typically solved by frequency domain methods such as the multi-harmonic
finite element method \cite{wolfmayr2023posteriori} or the harmonic
balance method \cite{GyselinckEtAl2003, Putek2019}, or by classical
time-stepping methods in time domain \cite{Thomee2006}.
Since classical time-stepping methods suffer from the curse of sequentiality,
different ways to employ parallelization also in time direction have been
investigated over the past decades \cite{Gander2015} including shooting
methods, domain decomposition or multigrid methods
\cite{GanderNeumueller2016} in time. We mention the application of
parareal \cite{GanderKuchytska2019, Kulchytska2021} and multigrid reduction
in time \cite{Bolten2020, Friedhoff2019} algorithms to the time-parallel
simulation of eddy current problems for electric machines.

On the other hand, space-time finite element methods \cite{StYa19} for
the numerical solution of time-dependent partial differential equations
have gained increasing attention over the past decade due to increasing
computing capabilities. Here, the idea is to treat the time variable like
an additional space variable and to construct a $(d+1)$-dimensional
space-time mesh when the spatial domain is in $\mathbb R^d$. In this setting,
a moving domain can conveniently be captured by the space-time mesh. While,
at the first glance, the method comes with the challenge of higher-dimensional
linear systems to be solved, it allows for both parallelization
\cite{GanderNeumueller2016} and adaptivity
\cite{LangerSchafelner2020, SteinbachYang2018} not only in space or time,
but also in space-time. Moreover, in the context of optimization problems
with partial differential equations as constraint, and involving an adjoint
state which is directed backward in time, space-time methods allow for an
additional level of parallelism by solving the coupled system for the state
and the adjoint in parallel \cite{LSTY2021}. Finally, we will see that for
space-time finite element methods temporal periodicity conditions as they
appear for rotating electric machines at a fixed operating point can be
incorporated in a straightforward manner.

The numerical analysis of space-time variational formulations for parabolic
evolution equations in Bochner spaces is based on the
Babu\v{s}ka--Ne\v{c}as theory \cite{BaAz72,Ne62} which requires the
proof of an inf-sup stability condition to ensure uniqueness, and
of a surjectivity condition to ensure existence of a solution.
In the context of space-time finite element methods this was done
in \cite{SchwabStevenson:2009}, see also
\cite{Andreev,St15,UrbanPatera}. Alternatively, one may use isogeometric
space-time finite element methods \cite{LangerMooreNeumueller2016},
least squares formulations \cite{StevensonWesterdiep:2021}, or
a Galerkin space-time finite element method in anisotropic
Sobolev spaces \cite{SteinbachZank}.

In this paper, we extend the analysis of \cite{St15}, which is based on
the Babu\v ska--Ne\v cas theorem, to the case of a coupled elliptic-parabolic
partial differential equation which is formulated in a spatial domain
which is changing in time. In particular we will restrict ourselves to the
case of a rotating subdomain as it is the case for rotating electric machines.
While we exploit this property in our proof of surjectivity of the bilinear
form (Lemma \ref{Lemma surjective}), we claim that the presented approach
can also be applied in more general settings, also including compressible
deformations of the computational domain.

The rest of this paper is organized as follows:
In Section \ref{sec:model_descr}, starting out from Maxwell's equations,
we derive the mathematical model of two-dimensional magneto-quasi-statics
which we consider in the sequel. The main part of this paper is
Section \ref{sec:var_form} where we verify the conditions of the
Babu\v ska--Ne\v cas theorem and conclude existence of a unique solution.
In Section \ref{sec:space_time_fe_dis} we introduce a space-time finite
element discretization and give the corresponding stability and error
estimates before resorting to numerical experiments in
Section \ref{sec:numerical_experiments}, where we also discuss the
parallel solution of the algebraic equations. In addition to the linear
model problem we also include a nonlinear model to describe the reluctivity
in iron. Finally, we summarize and comment on ongoing and future work.

\section{Model description}\label{sec:model_descr}
To model the electromagnetic fields in a rotating
electric machine, e.g., an electric motor, we consider the eddy current
approximation of the Maxwell equations, see, e.g., \cite{LaPaRe19},
\begin{align}\label{eqn:maxwell}
  \text{curl}_y H(y,t) = J(y,t), \quad
  \text{curl}_y E(y,t) = - \partial_t B(y,t), \quad
  \text{div}_y B(y,t) = 0,
\end{align}
subject to the constitutive equations
\begin{align}
\label{eqn:constitutive}
  B(y,t) = \mu(y)H(y,t) + M(y,t), \quad
  J(y,t) = J_i(y,t) + \sigma(y) \Big[E(y,t) + v(y,t) \times B(y,t) \Big],
\end{align}
where $\mu$ is the material dependent magnetic permeability, $\sigma$ is the
electric conductivity, $J_i$ is an impressed electric current, $M$ is the
magnetization which vanishes outside permanent magnets, and
$v=\frac{d}{dt}y(t)$ is the velocity along the trajectory
$y(t)=\varphi(t,x) \in {\mathbb{R}}^3$ for a reference point
$x \in {\mathbb{R}}^3$.
We assume that the deformation $\varphi$ is bijective and sufficiently
regular for all $t \in (0,T)$, satisfying $\varphi(0,x)=x$.
Here, $T > 0$ is a given time horizon, and we assume that
$\text{div}_y v(y,t) = 0$.

When using the vector potential ansatz $B = \text{curl}_y A$
satisfying $\text{div}_y B = \text{div}_y \text{curl}_y A = 0$,
we can rewrite the second equation in \eqref{eqn:maxwell} as
$0 = \text{curl}_y E + \partial_t B = \text{curl}_y [ E + \partial_t A]$,
which implies, recall that the vector potential $A$ is unique up to a gradient
field only, $E=-\partial_tA$. When using the reluctivity $\nu=1/\mu$ we then
have $H = \nu (B-M) = \nu (\text{curl}_yA-M)$ in order to rewrite the
first equation in \eqref{eqn:maxwell} as
\begin{align}\label{eqn:vector potential pde}
  \text{curl}_y \Big[ \nu(y) \Big(
  \text{curl}_y A(y,t) - M(y,t) \Big) \Big] =
  J_i(y,t) - \sigma(y) \Big[ \partial_t A(y,t) +
  \text{curl}_y A(y,t) \times v(y,t) \Big] .
\end{align}
Assuming that
\[
H(y,t) = (H_1(y_1,y_2,t),H_2(y_1,y_2,t), 0)^\top, \quad
M(y,t) = (M_1(y_1,y_2,t),M_2(y_1,y_2,t), 0)^\top,
\]
and
\[
  v(y,t) = (v_1(y_1, y_2,t),v_2(y_1,y_2,t),0)^\top, \quad
  J_i(y,t) = (0,0,j_i(y_1,y_2,t))^\top,
\]
which is often (approximately) the case for electric machines, it follows
that \linebreak
$A=(0,0,u(y_1,y_2,t))^\top$, and we can consider a spatially
two-dimensional reference domain $\Omega \subset {\mathbb{R}}^2$
describing the cross-section of the electric motor.
Using $x=(x_1,x_2,0)^\top$ for $(x_1,x_2) \in \Omega$, we can rewrite
\eqref{eqn:vector potential pde} as
\begin{eqnarray}\label{eqn:eddy_current_pde}
  && \sigma(y_1,y_2) \frac{d}{dt}u(y_1,y_2,t) - \text{div}_{(y_1,y_2)}
     [\nu(y_1,y_2) \nabla_{(y_1,y_2)} u(y_1,y_2,t)] \\
  && \nonumber \hspace*{4cm} =
  j_i(y_1,y_2,t) - \text{div}_{(y_1,y_2)}[\nu(y_1,y_2) M^\perp(y_1,y_2,t)],
\end{eqnarray}
where
\begin{align*}
  \frac{d}{dt}u(y_1,y_2,t) := \partial_t u(y_1,y_2,t)
  + v(y_1,y_2,t) \cdot \nabla_{(y_1,y_2)} u(y_1,y_2,t)
\end{align*}
is the total time derivative, and
$M^\perp = (-M_2(y_1,y_2,t), M_1(y_1,y_2,t))^\top$ is the
perpendicular of the first two components of the magnetization $M$.
In addition to the partial differential equation
\eqref{eqn:eddy_current_pde} we consider homogeneous
Dirichlet boundary conditions $u=0$ on $\partial \Omega \times (0,T)$
which implies that $B \cdot n = 0$ on $\partial \Omega \times (0,T)$,
i.e., no magnetic flux leaves the computational domain,
and either the initial condition $u(x_1,x_2,0)=0$ or
the periodicity condition $u(x_1,x_2,T)=u(x_1,x_2,0)$, both for $(x_1,x_2)
\in \Omega$ when $\sigma(x_1,x_2) > 0$. Note that, in the case of
periodicity conditions, we assume that also the geometry and the sources
are periodic with respect to the period $T$.

\begin{figure}[ht]
\begin{center}
\includegraphics[width=9cm]{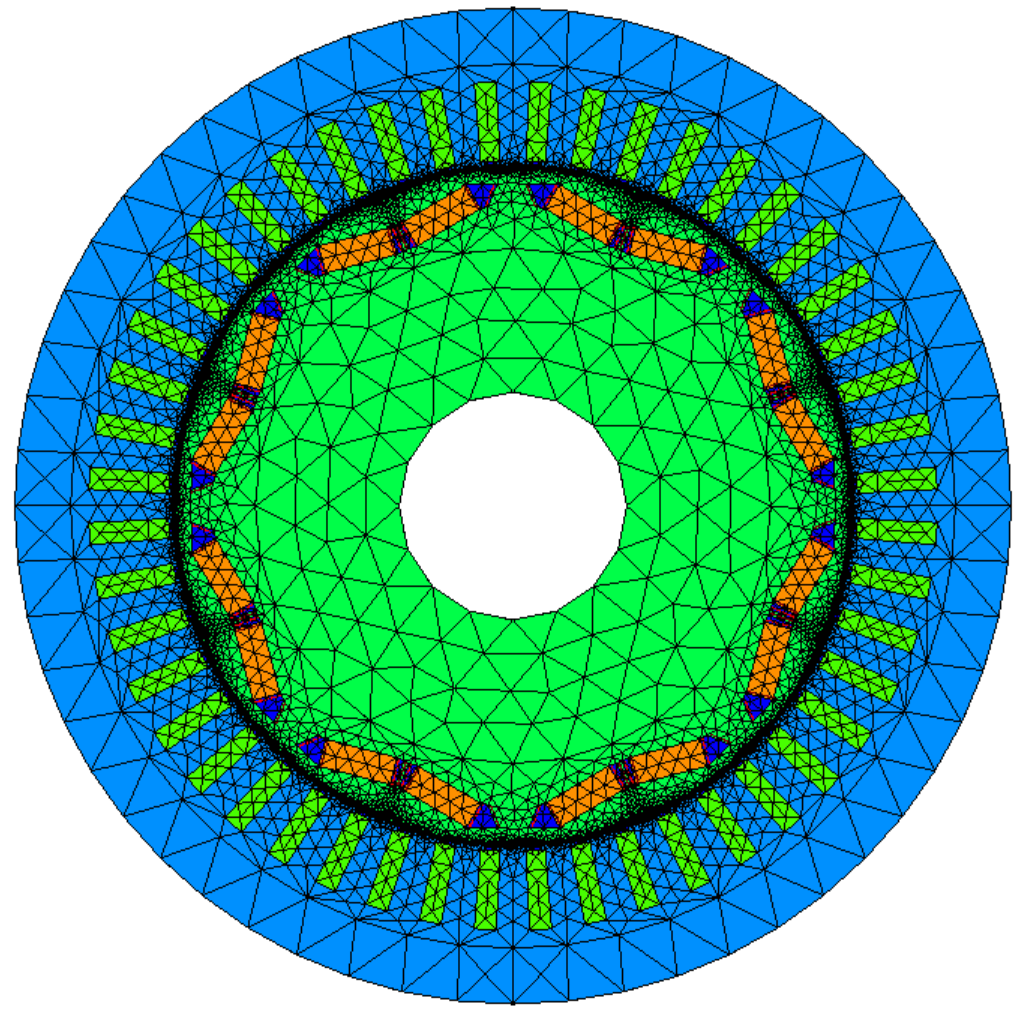}
\end{center}
\caption{Finite element mesh of the reference domain $\Omega$ describing 
an electric motor with the
stator $\Omega_s$ including the coils, the rotor domain $\Omega_r=\Omega_r(t)$
including the magnets and surrounding air pockets, and the thin air gap $\Omega_{air}$ separating $\Omega_r$ from $\Omega_s$.}
\label{Bild Motor}
\end{figure}

We consider an electric motor as shown in Fig.~\ref{Bild Motor} which
consists of a rotor in $\Omega_r(t)$, the stator in $\Omega_s$, and the
air domain $\Omega_{air}$ which is non-conducting, i.e.,
$\sigma = 0$ in $\Omega_{air}$. In this case, the evolution equation
\eqref{eqn:eddy_current_pde} degenerates to a coupled parabolic-elliptic
interface problem. Within the stator there are $48$ coils excited with a
current and which are considered to be non-conducting, since certain
materials are used to ensure this property. We furthermore denote the
union of all non-conducting
regions ($\sigma = 0$) by $\Omega_{non}$, and the regions with conducting material ($\sigma > 0$) by
$\Omega_{con}$. The stator in $\Omega_s$ is
fixed, i.e., $y= \varphi(t,x)=x$ for all $t \in (0,T)$ implying $v \equiv 0$,
but the rotor and the magnets within the rotor are rotating.

To cover all different regions, i.e., rotor, stator, and air, in a unified
framework, we use polar coordinates to write
$(x_1,x_2)^\top = r (\cos \phi , \sin \phi)^\top \in \Omega$
for $\phi \in [0,2\pi)$ and $r \in (r_0,R)$, where $r_0$ and $R$ are
the interior and exterior radii of the motor, respectively. In addition,
for $ r \in (r_0,r_1)$ we describe the rotor in the domain $\Omega_r(t)$
which is rotating with a velocity $\alpha$, and
which may contain non-conducting areas such as air as well, while for
$r \in (r_1,r_2)$ we have the stator domain $\Omega_s$ which is fixed in
time. In the remaining ring domain $r \in (r_1,r_2)$ we model non-conducting
air in $\Omega_{air}$. When using
\[
  \psi(r) = \left \{
    \begin{array}{cl}
      1 & \mbox{for} \; r \in (r_0,r_1), \\[2mm] \displaystyle
      \frac{r_2-r}{r_2-r_1} & \mbox{for} \; r \in (r_1,r_2), \\[3mm]
      0 & \mbox{for} \; r \in (r_2,R),
    \end{array}
  \right.
\]
we can introduce the deformation
\begin{align}\label{Def Rotation}
  y(t) = \varphi(t,x) =
  \begin{pmatrix}
    y_1(t) \\[2mm]
    y_2(t)
  \end{pmatrix}
  = \, r \,
  \begin{pmatrix}
    \cos\Big(\phi + \alpha \, \psi(r) t \Big) \\[1mm]
    \sin\Big(\phi + \alpha \psi(r) t \Big)
\end{pmatrix} \in \Omega(t) \; \mbox{for} \; t \in (0,T)
\end{align}
which is a rotation of velocity $\alpha$ in the rotor, and which is
fixed in the stator. Here, $\Omega(t) := \varphi(t, \Omega)$ represents
the moving domain at time $t\in [0,T]$, and likewise we will use
$\Omega_r(t)$, $\Omega_{non}(t)$ and $\Omega_{con}(t)$.
In general, the velocity is given as
\begin{align*}
  v(y,t) = \frac{d}{dt} y(t) \,
  = \, \alpha \, \psi(r) \, r \,
  \begin{pmatrix}
    - \sin\left(\phi + \alpha \, \psi(r) \, t \right) \\[1mm]
    \cos\left(\phi + \alpha \, \psi(r) \, r \right)
  \end{pmatrix}
  = \, \alpha \, \psi(r) \, 
  \begin{pmatrix}
    - y_2(t) \\[2mm]
    y_1(t)
  \end{pmatrix}.
\end{align*}
A simple calculation shows, recall $r = \sqrt{y_1^2+y_2^2}$, that
\[
  \frac{\partial}{\partial y_1}v_1(y,t) = - \alpha \, y_2 \, \psi'(r) \,
  \frac{y_1}{\sqrt{y_1^2+y_2^2}}, \quad
  \frac{\partial}{\partial y_2}v_2(y,t) = \alpha \, y_1 \, \psi'(r) \,
  \frac{y_2}{\sqrt{y_1^2+y_2^2}},
\]
and hence, $\text{div}_{(y_1,y_2)}v(y_1,y_2,t)=0$ follows. With this
we can write Reynold's transport theorem as
\begin{eqnarray}
  && \hspace*{-5mm} \frac{d}{dt} \int_{\Omega(t)} u(y_1,y_2,t) \,
  dy_1 dy_2
  =   \int_{\Omega(t)} \Big( \nonumber
        \partial_t u(y_1,y_2,t) +
        \text{div}_{(y_1,y_2)} [u(y_1,y_2,t) \,
        v(y_1,y_2,t)] \Big) \,
        dy_1 dy_2 \\
  && \hspace*{-5mm} = \int_{\Omega(t)} \Big( \nonumber
        \partial_t u(y_1,y_2,t) +
        u(y_1,y_2,t) \, \text{div}_{(y_1,y_2)} 
        v(y_1,y_2,t)
        + v(y_1,y_2,t) \cdot
        \nabla_{(y_1,y_2)} u(y_1,y_2,t)
        \Big) \,
        dy_1 dy_2 \\
  && \hspace*{-5mm} = \int_{\Omega(t)} \Big( \nonumber
        \partial_t u(y_1,y_2,t) 
        + v(y_1,y_2,t) \cdot
        \nabla_{(y_1,y_2)} u(y_1,y_2,t)
        \Big) \,
        dy_1 dy_2 \\
  && \hspace*{-5mm} = \int_{\Omega(t)} 
        \frac{d}{dt} u(y_1,y_2,t) \,
        dy_1 dy_2 \label{Reynolds} \, .
\end{eqnarray}

\section{Space-time variational formulation} \label{sec:var_form}
We consider the evolution equation
\begin{align}\label{PDE ST}
  \sigma(y) \frac{d}{dt} u(y,t) - \mbox{div}_y [\nu(y) \nabla_y u(y,t)]
  = j_i(y,t) - \mbox{div}_y [\nu(y) M^\bot(y,t)] \quad \mbox{for}
  \; (y,t) \in Q,
\end{align}
where the space-time domain $Q$ is given by the deformation
\eqref{Def Rotation} as
\begin{align*}
  Q := \Big \{ (y,t) \in {\mathbb{R}}^3 : y = \varphi(t,x) \in \Omega(t),
  \, x \in \Omega \subset {\mathbb{R}}^2, \, t \in (0,T)\Big\},
\end{align*}
with homogeneous Dirichlet boundary conditions $u(x,t)=0$ for
$(y,t) \in \partial \Omega(t) \times (0,T)$, and with either the initial
condition $u(x,0)=0$ or the periodicity
condition $u(x,T)=u(x,0)$ for $x \in \Omega \backslash \Omega_{non}$.
Note that the partial differential equation \eqref{PDE ST} covers both
conducting ($\sigma>0$) and non-conducting materials ($\sigma=0$),
and the case of a fixed domain $(v=0)$ as well as the rotating
regions.

In order to introduce a variational formulation of \eqref{PDE ST} in the
space-domain $Q$ we first define the Bochner space
$Y:=L^2(0,T;H^1_0(\Omega(t))$ covering the homogeneous Dirichlet
boundary conditions, equipped with the norm
\begin{align*}
  \| z \|_Y^2 := \int_0^T \int_{\Omega(t)} \nu(y) \,
  |\nabla_y z(y,t)|^2 \, dy \, dt \, ,
\end{align*}
and the ansatz space
\begin{align*}
  X := \left \{ u \in Y : \sigma \frac{d}{dt}u \in Y^*, \;
  u(x,0) = 0 \text{ for } x \in \Omega_{con} \right\} \subset Y,
\end{align*}
in the case of homogeneous initial conditions, or
\begin{align*}
  X := \left \{ u \in Y : \sigma \frac{d}{dt}u \in Y^*, \;
  u(x,T) = u(x,0) \text{ for } x \in \Omega_{con} \right\} \subset Y
\end{align*}
in the case of a periodic behavior.
The graph norm in $X$ is given in both cases as
\begin{align*}
  \| u \|_X^2 := \| u \|_Y^2 + \| \sigma \frac{d}{dt} u \|_{Y^*}^2 =
  \| u \|^2_Y + \| w_u \|^2_Y,
\end{align*}
where $w_u \in Y$ is the unique solution of the variational formulation
\begin{align}\label{eqn:auxiliary_variational_form}
  \int_0^T \int_{\Omega(t)} \nu \, \nabla_y w_u \cdot \nabla_y z \, dy \, dt
  =
  \int_0^T \int_{\Omega(t)} \sigma \, \frac{d}{dt}u \, z \, dy \ dt \quad
  \mbox{for all} \; z \in Y.
\end{align}
Now, the space-time variational formulation of the evolution equation
\eqref{PDE ST} is to find $u \in X$ such that
\begin{align}\label{eqn:variational_form}
  b(u,z) := \int_0^T \int_{\Omega(t)} \left[ \sigma \, \frac{d}{dt}u \, z
  + \nu \, \nabla_y u \cdot \nabla_y z \right] \, dy \, dt =
  \int_0^T \int_{\Omega(t)} \left[ j_i \, z + M^\perp \cdot \nabla_y z \right]
  \, dy \, dt
\end{align}
is satisfied for all $z \in Y$. As in the case of a fixed domain $\Omega$,
see \cite{St15}, we conclude that the bilinear form $b(\cdot,\cdot)$
is bounded, satisfying
\begin{align*}
  |b(u,z)| \, \leq \, \sqrt{2} \, \|u\|_X \, \|z\|_Y \quad
  \mbox{for all} \; u \in X, z \in Y .
\end{align*}
Moreover, similar as in \cite{St15} and due to \eqref{Reynolds} we can
prove that the bilinear form $b(\cdot,\cdot)$ satisfies the inf-sup
stability condition
\begin{align}\label{inf-sup}
  \frac{1}{\sqrt{2}} \, \|u\|_X \leq
  \sup\limits_{0 \neq z \in Y} \frac{b(u,z)}{\|z\|_Y} \quad \mbox{for all} \;
  u \in X \, .
\end{align}
Indeed, for any given $u \in X$ let $w_u \in Y$ be the unique solution
of the variational formulation \eqref{eqn:auxiliary_variational_form}.
Due to $X \subset Y$ we can consider $z_u := u + w_u \in Y$ to obtain,
when using \eqref{eqn:auxiliary_variational_form} twice,
\begin{eqnarray*}
  b(u,z_u)
  & = & b(u,u+w_u) \, = \,
        \int_0^T \int_{\Omega(t)} \sigma \frac{d}{dt}u \, u \, dy \, dt +
        \int_0^T \int_{\Omega(t)} \nu \, \nabla_y u \cdot \nabla_y u \, dy \, dt
  \\
  & & \hspace*{25mm}
      + \int_0^T \int_{\Omega(t)} \sigma \frac{d}{dt}u \, w_u \, dy \, dt +
      \int_0^T \int_{\Omega(t)} \nu \, \nabla_y u \cdot \nabla_y w_u \, dy \, dt
  \\
  & & \hspace*{-2cm}
      = \, 2 \int_0^T \int_{\Omega(t)} \sigma \frac{d}{dt}u \, u \, dy \, dt
        +
        \int_0^T \int_{\Omega(t)} \nu \, \nabla_y u \cdot \nabla_y u \, dy \, dt
        +
        \int_0^T \int_{\Omega(t)} \nu \, \nabla_y w_u
        \cdot \nabla_y w_u \, dy \, dt \\
  && \hspace*{-2cm}
     = \int_0^T \int_{\Omega(t)} \sigma \, \frac{d}{dt} [u]^2 \, dy \, dt +
        \| u \|_Y^2 + \| w_u \|^2_Y \, \geq \, \| u \|_X^2 \, .
\end{eqnarray*}
Note that, since $\sigma$ is constant in time,
we can use \eqref{Reynolds} to conclude
\begin{eqnarray*}
  \int_0^T \int_{\Omega(t)} \sigma \, \frac{d}{dt} [u]^2 \, dy \, dt
  & = & \int_0^T \frac{d}{dt} \int_{\Omega(t)} \sigma \, [u]^2 \, dy \, dt
        \, = \, \left.
        \int_{\Omega(t)} \sigma(y) \, [u(y,t)]^2 \, dy \right|_0^T \\
  & = &  \int_{\Omega_{con}(T)} \sigma(y) \, [u(y,T)]^2 \, dy \, > \, 0
\end{eqnarray*}
in the case of initial conditions $u(x,0)=0$ for $x \in \Omega_{con}(0)$, or
\[
  \int_0^T \int_{\Omega(t)} \sigma \, \frac{d}{dt} [u]^2 \, dy \, dt \, = \,
  \int_{\Omega_{con}(T)} \sigma(y) \, [u(y,T)]^2 \, dy -
  \int_{\Omega_{con}(0)} \sigma(y) \, [u(y,0)]^2 \, dy \, = \, 0 
\]
in the case of periodicity $u(x,T)=u(x,0)$.

On the other hand, by the triangle and H\"older inequality we have
\[
  \| z_u \|_Y = \| u + w_u \|_Y \leq \| u \|_Y + \| w_u \|_Y
  \leq \sqrt{2} \, \sqrt{\| u \|_Y^2 + \| w_u \|_Y^2} =
  \sqrt{2} \, \| u \|_X ,
\]
i.e.,
\[
  b(u,z_u) \, \geq \, \| u \|_X^2 \, \geq \,
  \frac{1}{\sqrt{2}} \, \| u \|_X \| z_u \|_Y
\]
implies the inf-sup stability condition \eqref{inf-sup}.

More involved, and not as simple as in the static case, is to prove that the
bilinear form $b(\cdot,\cdot)$ is also surjective.

\begin{lemma}\label{Lemma surjective}
  For all $ z \in Y \backslash \{ 0 \}$ there exists a
  $\widetilde{u} \in X$ such that
  \[
    b(\widetilde{u},z) \neq 0 \, .
  \]
\end{lemma}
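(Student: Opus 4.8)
The plan is to prove the contrapositive, which is the form in which this condition enters the Babu\v ska--Ne\v cas theorem anyway (injectivity of the adjoint of the operator induced by $b(\cdot,\cdot)$): if $z\in Y$ satisfies $b(u,z)=0$ for all $u\in X$, then $z=0$. Throughout I would argue as in the proof of the inf-sup estimate \eqref{inf-sup}, using Reynolds' transport theorem \eqref{Reynolds} together with $\partial_t\sigma=0$, and I would exploit that the rotation \eqref{Def Rotation} is volume preserving, $\operatorname{div}_y v=0$: the pullback along $\varphi$ to the fixed reference cylinder $\Omega\times(0,T)$ is then an $L^2$-isometry, maps $H^1_0(\Omega(t))$ onto $H^1_0(\Omega)$ with equivalent norms, and turns the total time derivative $\frac{d}{dt}$ into $\partial_t$ of the pulled-back function. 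This is what lets one transfer the space-time trace and integration-by-parts theory for parabolic problems on fixed domains from \cite{St15} to the present moving-domain situation.

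The first step is to test $b(u,z)=0$ only with $u\in C_0^\infty(Q)\subset X$. This shows that $z$ solves the backward equation $-\sigma\frac{d}{dt}z-\operatorname{div}_y[\nu\nabla_y z]=0$ in $Q$ in the distributional sense, and by density of $C_0^\infty(Q)$ in $Y$ that $\sigma\frac{d}{dt}z\in Y^*$ with the representation
\[
  \Big\langle \sigma\tfrac{d}{dt}z,\,w\Big\rangle \;=\; \int_0^T\!\!\int_{\Omega(t)}\nu\,\nabla_y z\cdot\nabla_y w\,dy\,dt \qquad\text{for all }\ w\in Y .
\]
In particular, $z$ restricted to the conducting region $\{\sigma>0\}$ has well-defined spatial traces $z(\cdot,0)\in L^2(\Omega_{con}(0))$ and $z(\cdot,T)\in L^2(\Omega_{con}(T))$, and the space-time integration-by-parts identity
\[
  \Big\langle \sigma\tfrac{d}{dt}u,\,z\Big\rangle + \Big\langle \sigma\tfrac{d}{dt}z,\,u\Big\rangle
  \;=\; \int_{\Omega_{con}(T)}\!\!\sigma\,u(\cdot,T)\,z(\cdot,T)\,dy - \int_{\Omega_{con}(0)}\!\!\sigma\,u(\cdot,0)\,z(\cdot,0)\,dy
\]
holds for all $u\in X$. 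Making this first step rigorous on the moving domain is the main obstacle, and it is precisely here that the volume-preserving structure of \eqref{Def Rotation} is used.

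In the second step I would insert this identity into $0=b(u,z)=\langle\sigma\frac{d}{dt}u,z\rangle+\int_0^T\!\int_{\Omega(t)}\nu\,\nabla_y u\cdot\nabla_y z\,dy\,dt$ and replace $\langle\sigma\frac{d}{dt}z,u\rangle$ by $\int_0^T\!\int_{\Omega(t)}\nu\,\nabla_y z\cdot\nabla_y u\,dy\,dt$ from the first step. The two reluctivity terms cancel, and using the initial condition $u(\cdot,0)=0$ on $\Omega_{con}(0)$ satisfied by every $u\in X$ one is left with $\int_{\Omega_{con}(T)}\sigma\,u(\cdot,T)\,z(\cdot,T)\,dy=0$ for all $u\in X$. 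Since the final traces $\{u(\cdot,T):u\in X\}$ are dense in $L^2(\Omega_{con}(T))$ — e.g. the functions determined by $u(\varphi(t,x),t)=\frac{t}{T}\eta(x)$ with $\eta\in C_0^\infty(\Omega)$ lie in $X$ and realize such traces — and $\sigma$ is bounded away from zero on $\Omega_{con}$, this forces $z(\cdot,T)=0$ on $\Omega_{con}(T)$. In the periodic case, testing with periodic $u\in X$ gives analogously $z(\cdot,T)=z(\cdot,0)$ on the conducting region.

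Finally I would run the energy argument. Testing the representation of $\sigma\frac{d}{dt}z$ with $w=z$ gives $\langle\sigma\frac{d}{dt}z,z\rangle=\|z\|_Y^2$, while, exactly as in the computation following \eqref{inf-sup} but with $z$ in place of $u$, Reynolds' transport theorem \eqref{Reynolds} and $\partial_t\sigma=0$ yield
\[
  \Big\langle\sigma\tfrac{d}{dt}z,\,z\Big\rangle \;=\; \tfrac12\int_0^T\tfrac{d}{dt}\!\int_{\Omega(t)}\sigma\,[z]^2\,dy\,dt \;=\; \tfrac12\int_{\Omega_{con}(T)}\!\!\sigma\,[z(\cdot,T)]^2\,dy - \tfrac12\int_{\Omega_{con}(0)}\!\!\sigma\,[z(\cdot,0)]^2\,dy .
\]
Since $z(\cdot,T)=0$ on $\Omega_{con}(T)$ (respectively $z(\cdot,T)=z(\cdot,0)$ in the periodic case), the right-hand side is nonpositive (respectively zero), so $\|z\|_Y^2\le 0$ and hence $z=0$, contradicting $z\in Y\setminus\{0\}$. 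This shows that no nonzero $z\in Y$ annihilates $b(\cdot,z)$ on $X$, which is the assertion of the lemma. The only delicate ingredient is the first step; the cancellation of the reluctivity terms and the concluding energy estimate are then routine and mirror the proof of \eqref{inf-sup}.
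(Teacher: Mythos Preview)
Your argument is correct and takes a genuinely different route from the paper. The paper proceeds \emph{constructively}: for a given $z$ it builds an explicit $\widetilde u$ by integrating $z$ along trajectories in $\Omega_{con}(t)$ (so that $\frac{d}{dt}\widetilde u=z$ there) and by solving, for each $t$, the elliptic problem $-\operatorname{div}_y[\nu\nabla_y\widetilde u]=z$ in $\Omega_{non}(t)$ with matching Dirichlet data on the interface. Positivity of $b(\widetilde u,z)$ is then shown via a commutator computation $\nabla_y\frac{d}{dt}\widetilde u=\frac{d}{dt}\nabla_y\widetilde u+\alpha\,(\partial_{y_2}\widetilde u,-\partial_{y_1}\widetilde u)^\top$, valid precisely because the motion is a rigid rotation, together with a Steklov--Poincar\'e argument for the non-conducting region. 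Your contrapositive avoids both of these ingredients: the backward equation for $z$ and the cancellation of the reluctivity terms replace the elliptic construction in $\Omega_{non}$, and the pullback to the reference cylinder replaces the commutator calculation. The price you pay is having to invoke the degenerate parabolic trace theory (well-definedness of $\sqrt{\sigma}\,z(\cdot,0)$, $\sqrt{\sigma}\,z(\cdot,T)$ and the integration-by-parts identity when only $\sigma\frac{d}{dt}z\in Y^*$), which you rightly flag as the main technical point; on the fixed reference domain this follows from the standard Lions--Magenes embedding applied to $\sqrt{\sigma}\,Z$. Your approach is thus more conceptual and, as you note, does not rely on the motion being a rotation beyond $\operatorname{div}_y v=0$, whereas the paper's construction is more explicit and sidesteps the abstract trace machinery.
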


\begin{proof}
  We first consider the case of initial conditions $u(x,0)=0$ for
  $x \in \Omega_{con}$. Using the representation \eqref{Def Rotation}
  and for given $z \in Y \backslash \{ 0 \}$ we first define
  \[
    \widetilde{u}(y,t) =
    \widetilde{u}(\varphi(t,x),t) :=
    \int_0^t z(\varphi(s,x),s) \, ds,
    \quad \frac{d}{dt} \widetilde{u}(y,t) = z(y,t) \quad \mbox{for} \;
    y \in \Omega_{con}(t), t \in (0,T).
  \]
  By definition we have $\widetilde{u} \in X$ satisfying the initial
  condition $\widetilde{u}(x,0)=0$ for $x \in \Omega_{con}$ and
  \begin{eqnarray*}
    b(\widetilde{u},z)
    & = & \int_0^T \int_{\Omega(t)} \sigma(y)
        \frac{d}{dt} \widetilde{u}(y,t) \,
          z(y,t) \,
          dy \, dt
          + \int_0^T \int_{\Omega(t)} \nu(y) \, \nabla_y
        \widetilde{u}(y,t) \cdot \nabla_y
          z(y,t) \, dy \, dt \\
    & = &  
        \int_0^T \int_{\Omega_{con}(t)} \sigma(y) \,
          [z(y,t)]^2 \, dy \, dt +
        \int_0^T \int_{\Omega(t)}
        \nu(y) \, \nabla_y
          \widetilde{u}(y,t) \cdot
          \nabla_y \frac{d}{dt}
          \widetilde{u}(y,t) \,
        dy \, dt \, .
  \end{eqnarray*}
  Since the first term is obviously positive, it remains to treat the
  second term which involves an integral over
  $\Omega(t) = (\Omega_s \cap \Omega_{con}) \cup
  (\Omega_r(t) \cap \Omega_{con}(t)) \cup \Omega_{non}(t)$.
  In the stator domain $\Omega_s$, i.e., for $ r \in (r_2,R)$,
  we have $y(t) = x$ for all $t \in (0,T)$ and
  $v(y,t)=0$, and hence
  \begin{eqnarray*}
    \int_0^T \int_{\Omega_s \cap\Omega_{con}} \nu(x) \, \nabla_x
    \widetilde{u}_s(x,t) \cdot \nabla_x \partial_t
    \widetilde{u}_s(x,t) \, dx \, dt
    & = & \frac{1}{2} \int_0^T \frac{d}{dt} \int_{\Omega_s \cap\Omega_{con}}
          \nu(x) \, |\nabla_x \widetilde{u}_s(x,t)|^2 dx \,dt \\
    & = & \frac{1}{2} \int_{\Omega_s \cap\Omega_{con}} \nu(x) \,
        |\nabla_x \widetilde{u}_s(x,T)|^2 dx \geq 0 \, .
  \end{eqnarray*}
  Next we consider the rotor domain $\Omega_r(t)$, i.e., $r \in (r_0,r_1)$.
  For the total time derivative we then obtain
  \begin{eqnarray*}
    \frac{d}{dt} \widetilde{u}(y,t)
    & = & \frac{\partial}{\partial t} \widetilde{u}(y,t) +
          v(y,t)  \cdot
          \nabla_y \widetilde{u}(y,t) \\
    & = & \frac{\partial}{\partial t} \widetilde{u}(y,t) -
          \alpha \, y_2 \frac{\partial}{\partial y_1}
          \widetilde{u}(y,t) +
          \alpha \, y_1 \frac{\partial}{\partial y_2}
          \widetilde{u}(y,t).
  \end{eqnarray*}
  To compute the spatial gradient, we now consider
  \begin{eqnarray*}
    \frac{\partial}{\partial y_1} \frac{d}{dt}
    \widetilde{u}(y,t)
    & = & \frac{\partial}{\partial y_1} \left[
          \frac{\partial}{\partial t} \widetilde{u}(y,t) -
          \alpha \, y_2 \frac{\partial}{\partial y_1}
          \widetilde{u}(y,t) +
          \alpha \, y_1 \frac{\partial}{\partial y_2}
          \widetilde{u}(y,t) \right] \\
    & = & \frac{\partial}{\partial y_1} \frac{\partial}{\partial t}
          \widetilde{u}(y,t) -
          \alpha \, y_2 \frac{\partial^2}{\partial y_1^2}
          \widetilde{u}(y,t) +
          \alpha \, y_1 \frac{\partial}{\partial y_1}
          \frac{\partial}{\partial y_2} \widetilde{u}(y,t)
          + \alpha \, \frac{\partial}{\partial y_2}
          \widetilde{u}(y,t) \\
    & = & \frac{\partial}{\partial t} \frac{\partial}{\partial y_1} 
          \widetilde{u}(y,t) -
          \alpha \, y_2 \frac{\partial^2}{\partial y_1^2}
          \widetilde{u}(y,t) +
          \alpha \, y_1 \frac{\partial}{\partial y_2}
          \frac{\partial}{\partial y_1} \widetilde{u}(y,t)
          + \alpha \, \frac{\partial}{\partial y_2}
          \widetilde{u}(y,t) \\
    & = & \frac{d}{dt} \frac{\partial}{\partial y_1}
          \widetilde{u}(y,t)
          + \alpha \, \frac{\partial}{\partial y_2}
          \widetilde{u}(y,t),
  \end{eqnarray*}
  and
  \begin{eqnarray*}
    \frac{\partial}{\partial y_2} \frac{d}{dt}
    \widetilde{u}(y,t)
    & = & \frac{\partial}{\partial y_2} \left[
          \frac{\partial}{\partial t} \widetilde{u}(y,t) -
          \alpha \, y_2 \frac{\partial}{\partial y_1}
          \widetilde{u}(y,t) +
          \alpha \, y_1 \frac{\partial}{\partial y_2}
          \widetilde{u}(y,t) \right] \\
    & = & \frac{\partial}{\partial y_2} \frac{\partial}{\partial t}
          \widetilde{u}(y,t) -
          \alpha \, y_2 \frac{\partial}{\partial y_2}
          \frac{\partial}{\partial y} \widetilde{u}(y,t)
          + \alpha y_1 \frac{\partial^2}{\partial y_2^2}
          \widetilde{u}(y,t)
          - \alpha \, \frac{\partial}{\partial y_1}
          \widetilde{u}(y,t) \\
    & = & \frac{\partial}{\partial t} \frac{\partial}{\partial y_2} 
          \widetilde{u}(y,t) -
          \alpha \, y_2 \frac{\partial}{\partial y_1}
          \frac{\partial}{\partial y_2}\widetilde{u}(y,t)
          + \alpha \, y_1 \frac{\partial^2}{\partial y_2^2}
          \widetilde{u}(y,t)
          - \alpha \, \frac{\partial}{\partial y_1}
          \widetilde{u}(y,t) \\
    & = & \frac{d}{dt} \frac{\partial}{\partial y_2}
          \widetilde{u}(y,t)
          - \alpha \, \frac{\partial}{\partial y_1}
          \widetilde{u}(y,t) .
  \end{eqnarray*}
  Hence we obtain
  \begin{eqnarray*}
    \nabla_y \widetilde{u}(y,t) \cdot
    \nabla_y \frac{d}{dt} \widetilde{u}(y,t)
    & = & \frac{\partial}{\partial y_1} \widetilde{u}(y,t)
          \frac{\partial}{\partial y_1} \frac{d}{dt}
          \widetilde{u}(y,t) +
          \frac{\partial}{\partial y_2} \widetilde{u}(y,t)
          \frac{\partial}{\partial y_2} \frac{d}{dt}
          \widetilde{u}(y,t) \\
    && \hspace*{-4.5cm} =
       \frac{\partial}{\partial y_1} \widetilde{u}(y,t)
       \left[ \frac{d}{dt} \frac{\partial}{\partial y_1}
       \widetilde{u}(y,t)
       + \alpha \, \frac{\partial}{\partial y_2}
       \widetilde{u}(y,t) \right]
       + \frac{\partial}{\partial y_2} \widetilde{u}(y,t)
       \left[ \frac{d}{dt} \frac{\partial}{\partial y_2}
       \widetilde{u}(y,t)
       - \alpha \, \frac{\partial}{\partial y_1}
       \widetilde{u}(y,t) \right] \\
    && \hspace*{-4.5cm} =
       \frac{\partial}{\partial y_1} \widetilde{u}(y,t) 
       \frac{d}{dt} \frac{\partial}{\partial y_1}
       \widetilde{u}(y,t)
       + \frac{\partial}{\partial y_2} \widetilde{u}(y,t) 
       \frac{d}{dt} \frac{\partial}{\partial y_2}
       \widetilde{u}(y,t) \\
    && \hspace*{-4.5cm} =
       \nabla_y \widetilde{u}(y,t) \cdot \frac{d}{dt}
       \nabla_y \widetilde{u}(y,t),
  \end{eqnarray*}
  and therefore,
  \begin{eqnarray*}
    && \int_0^T \int_{\Omega_r(t) \cap\Omega_{con}(t)} \nu(y) \, \nabla_y
       \widetilde{u}(y,t) \cdot \nabla_y \frac{d}{dt} 
       \widetilde{u}(y,t) \, dy \, dt \\
    && \hspace*{3cm}
       = \,  \int_0^T \int_{\Omega_r(t) \cap\Omega_{con}(t)} \nu(y) \,
       \nabla_y \widetilde{u}(y,t) \cdot \frac{d}{dt}\nabla_y 
       \widetilde{u}(y,t) \, dy \, dt \\
    && \hspace*{3cm}
       = \, \frac{1}{2} \int_0^T \int_{\Omega_r(t) \cap\Omega_{con}(t)}
       \nu(y) \, \frac{d}{dt} \,
       |\nabla_y \widetilde{u}(y,t)|^2
       \, dy \, dt  \\
    && \hspace*{3cm} = \,
       \frac{1}{2} \int_0^T \frac{d}{dt} \int_{\Omega_r(t) \cap\Omega_{con}(t)}
       \nu(y) \, |\nabla_y
       \widetilde{u}(y,t)|^2
       \, dy \, dt \\
    && \hspace*{3cm} = \, \frac{1}{2} \int_{\Omega_r(T) \cap\Omega_{con}(T)}
       \nu(y) \, |\nabla_y
       \widetilde{u}(y,T)|^2 \, dy
       \geq 0 
  \end{eqnarray*}
  follows, i.e.,
  \[
    b(\widetilde{u},z) \geq
    \int_0^T \int_{\Omega_{con}(t)} \sigma(y) \, [z(y,t)]^2 \, dy \, dz +
    \int_0^T \int_{\Omega_{non}(t)} \nu(y) \, \nabla_y \widetilde{u}(y,t)
    \cdot \nabla_y z(y,t) \, dy \, dt \, .
  \]
  It remains to define $\widetilde{u}$ in the non-conduction regions in
  suitable way. In any non-conducting subregion we can write
  \begin{eqnarray*}
    && \int_0^T \int_{\Omega_{non}(t)}
    \nu(y) \, \nabla_y \widetilde{u}(y,t) \cdot \nabla_y
    z(y,t) \, dy \, dt
    \\
    && \hspace*{1cm} =
    \int_0^T \int_{\Omega_{non}(t)} [z(y,t)]^2 \, d y \, dt
    +
    \int_0^T \int_{\partial \Omega_{non}(t)} n_y \cdot [
    \nu(y) \,
    \nabla_y \widetilde{u}(y,t) ] \, z(y,t) \, ds_y dt,
  \end{eqnarray*}
  when $\widetilde{u}$ is a solution of the quasi-static partial
  differential equation
  \[
    - \mbox{div}_y [\nu(y) \nabla_y \widetilde{u}(y,t)] =
    z(y,t) \quad \mbox{for} \; y \in \Omega_{non}(t), \, t \in (0,T).
  \]
  To ensure $\widetilde{u} \in L^2(0,T;H^1_0(\Omega(t)))$ we formulate
  the boundary conditions
  $\widetilde{u}_{|\Omega_{non}(t)} = \widetilde{u}_{|\Omega_{con}(t)}$ 
  on $\partial \Omega_{non}(t) \cap \partial \Omega_{con}(t)$ and
  $\widetilde{u}_{|\Omega_{non}(t)} = 0$ 
  on $\partial \Omega_{non}(t) \cap \partial \Omega(t)$
  for all $t \in (0,T)$. The solution of this
  quasi-static Dirichlet boundary value problem implies the
  Dirichlet to Neumann map
  \[
    n_y \cdot \Big[ \nu(y) \, \nabla_y \widetilde{u}(y,t) \Big] =
    (S \widetilde{u})(y,t) \quad
    \mbox{for} \; y \in \partial \Omega_{non}(t), \, t \in (0,T),
  \]
  with the Steklov--Poincar\'e operator
  $S : H^{1/2}(\partial \Omega_{non}(t)) \to H^{-1/2}(\partial \Omega_{non}(t))$.
  Since the shape of $\Omega_{non}(t)$ is fixed, $S$ does not
  depend on time. On the other hand, since $S$ is self-adjoint and
  positive semi-definite, we can factorize $S$ to write
  \begin{eqnarray*}
    \int_0^T \int_{\partial \Omega_{non}(t)}
    (S \widetilde{u})(y,t) \, z(y,t) \, ds_y dt
    & = &
    \int_0^T \int_{\partial \Omega_{non}(t)}
    (S^{1/2} \widetilde{u})(y,t) \,
          (S^{1/2} z)(y,t) \, ds_y dt \\
    & = & 
    \int_0^T \int_{\partial \Omega_{non}(t)}
    (S^{1/2} \widetilde{u})(y,t) \, \frac{d}{dt}
    (S^{1/2} \widetilde{u})(y,t) \, ds_y dt \\
    & = & \frac{1}{2}
    \int_0^T \frac{d}{dt} \int_{\partial \Omega_{non}(t)}
        \Big[
        (S^{1/2} \widetilde{u})(y,t) \Big]^2
        \, ds_y dt \\
    & = & \frac{1}{2}
        \int_{\partial \Omega_{non}(t)}
        \Big[
        (S^{1/2} \widetilde{u})(y,T) \Big]^2
        \, ds_y dt \, \geq \, 0 \, .
  \end{eqnarray*}
  This finally gives
  \[
    b(\widetilde{u},z)
    \geq
    \int_0^T \int_{\Omega_{con}(t)} \sigma(y) \,
    [z(y,t)]^2 \, dy \, dt +
    \int_0^T \int_{\Omega_{non}(t)} [z(y,t)]^2 \,
    d s_y \, dt > 0 .
  \]
  It remains to consider the case of the periodicity condition
  $\widetilde{u}(x,T)=\widetilde{u}(x,0)$ for $x \in \Omega_{con}$.
  In order to construct a suitable $\widetilde{u}$ in this case,
  let us recall that in the case of the initial condition
  $\widetilde{u}(x,0)$ for $x \in \Omega_{con}$ we have constructed
  $\widetilde{u}$ as solution of the ordinary differential equation
  \[
    \frac{d}{dt}
    \widetilde{u}(\varphi(t,x),t)
    =
    z(\varphi(t,x),t) \quad
    \mbox{for} \; t \in (0,T), \quad
    \widetilde{u}(\varphi(0,x),0)
    = 0 .
  \]
  To allow for a periodic behavior of the solution $\widetilde{u}$,
  we now consider the ordinary differential equation
  \[
    \frac{d}{dt}
    \widetilde{u}(\varphi(t,x),t)
    +
    \widetilde{u}(\varphi(t,x),t)
    =
    z(\varphi(t,x),t) \quad
    \mbox{for} \; t \in (0,T), 
  \]
  with the solution
  \[
    \widetilde{u}(\varphi(t,x),t)
    = e^{-t} \,
    \widetilde{u}(\varphi(0,x),0)
    +
    \int_0^t e^{s-t} z(\varphi(s,x),s) \, ds .
  \]
  From the periodicity condition $\widetilde{u}(x,T) = \widetilde{u}(x,0)$
  we then conclude
  \[
    \widetilde{u}(\varphi(0,x),0)
    =
    \widetilde{u}(\varphi(T,x),T)
    = e^{-T} \, \widetilde{u}(\varphi(0,x),0)
    +
    \int_0^T e^{s-T} z(\varphi (s,x),s) \, ds ,
  \]
  i.e.,
  \[
    \widetilde{u}(\varphi(0,x),0)
    := \frac{1}{1-e^{-T}}
    \int_0^T e^{s-T} z(\varphi
    (s,x),s) \, ds .
  \]
  By construction we have $\widetilde{u} \in X$, and hence
  \begin{eqnarray*}
    && \int_0^T \int_{\Omega_{con}(t)}
    \sigma(y) \, \frac{d}{dt} \widetilde{u}(y,t) \, z(y,t) \, dy \, dt +
    \int_0^T \int_{\Omega_{con}(t)} \nu(y) \,
    \nabla_y \widetilde{u}(y,t) \cdot \nabla_y z(y,t) \,
       dy \, dt \\
    && \hspace*{0.5cm}
       = \int_0^T \int_{\Omega_{con}(t)}
       \sigma(y) \, \frac{d}{dt} \widetilde{u}(y,t) \,
       \left[ \frac{d}{dt} \widetilde{u}(y,t) + \widetilde{u}(y,t)
       \right] \, dy \, dt \\
    && \hspace*{1cm} +
       \int_0^T \int_{\Omega_{con}(t)} \nu(y) \,
       \nabla_y \widetilde{u}(y,t) \cdot \nabla_y \left[
       \frac{d}{dt} \widetilde{u}(y,t) + \widetilde{u}(y,t)
       \right] \, dy \, dt \\
    && \hspace*{0.5cm}
       = \int_0^T \int_{\Omega_{con}(t)}
       \sigma(y) \,
       \left[ \frac{d}{dt} \widetilde{u}(y,t) 
       \right]^2 \, dy \, dt
       +
       \int_0^T \int_{\Omega_{con}(t)} \nu(y) \,
       |\nabla_y \widetilde{u}(y,t)|^2 \, dy \, dt \\
    && \hspace*{-5mm} +
       \int_0^T \int_{\Omega_{con}(t)}
       \sigma(y) \, \frac{d}{dt} \widetilde{u}(y,t) \,
       \widetilde{u}(y,t) \, dy \, dt +
       \int_0^T \int_{\Omega_{con}(t)} \nu(y) \,
       \nabla_y \widetilde{u}(y,t) \cdot \nabla_y 
       \frac{d}{dt} \widetilde{u}(y,t) \, dy \, dt .
  \end{eqnarray*}
Now the assertion follows as in the previous case.
\end{proof}

To summarize, all assumptions of the Babu\v ska--Ne\v cas theorem
\cite{BaAz72,ErnGuermond:2004,Ne62} are satisfied, which finally ensures unique
solvability of the space-time variational formulation
\eqref{eqn:variational_form}.

\section{Space-time finite element discretization}
\label{sec:space_time_fe_dis}
For the space-time finite element discretization of the variational
formulation \eqref{eqn:variational_form} we introduce conforming
finite dimensional spaces $X_h \subset X$ and $Y_h \subset Y$ where we
assume as in the continuous case $X_h \subset Y_h$. For our specific
purpose we even consider
\begin{align*}
  X_h = Y_h := S_h^1(Q_h) \cap X = \text{span}\{\varphi_k\}_{k=1}^M
\end{align*}
as the space of piecewise linear and continuous basis functions $\varphi_k$
which are defined with respect to some admissible locally quasi-uniform
decomposition $Q_h = \{\tau_\ell\}_{\ell=1}^N$ of the space-time domain $Q$
into shape-regular simplicial finite elements $\tau_\ell$ of mesh size
$h_\ell$, see e.g. \cite{NeKa19,St15}, and Fig. \ref{fig:motor_body}
for a space-time finite element mesh of a rotating electric motor.

\begin{figure}[tbhp]
\begin{center}
\begin{minipage}{0.75\linewidth}
\includegraphics[width=1\linewidth]{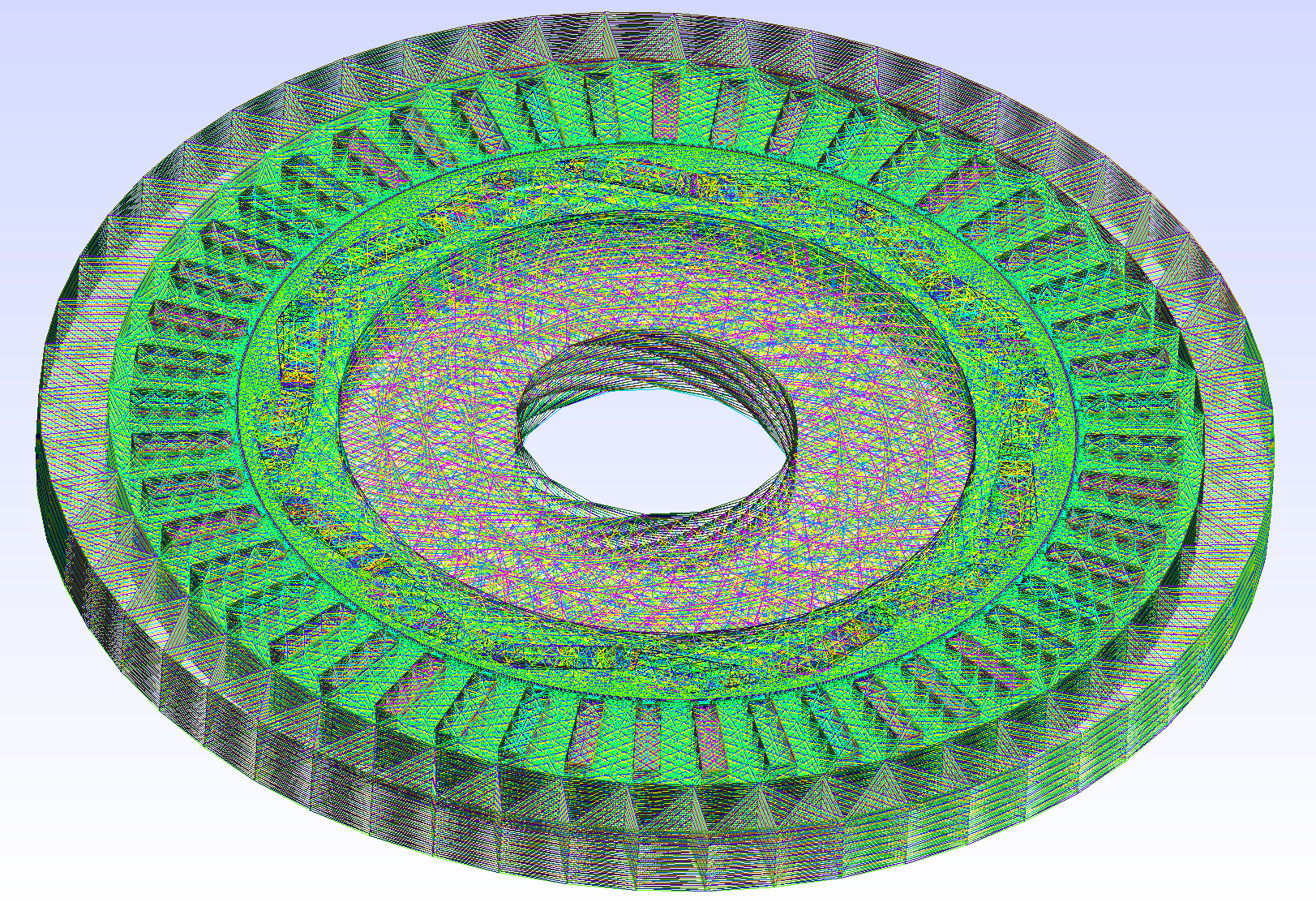} 
\end{minipage}
\end{center} 
\caption{The space-time mesh of an 90 degrees rotating electric motor
  with 16 magnets and 48 coils generated by Gmsh \cite{gmsh}. The mesh
  is divided into 30 time slices in temporal direction and consists
  of 333 288 nodes and 1 978 689 elements.}
\label{fig:motor_body}
\end{figure}

The Galerkin space-time finite element variational formulation of
\eqref{eqn:variational_form} reads to find $u_h \in X_h$, such that
\begin{align}\label{eqn:discrete_variational_form}
  \int_0^T \int_{\Omega(t)} \left[ \sigma \, \frac{d}{dt}u_h \, z_h
  + \nu \, \nabla_y u_h \cdot \nabla_y z_h \right] \, dy \, dt =
  \int_0^T \int_{\Omega(t)} \left[ j_i \, z_h + M^\perp \cdot \nabla_y z_h \right]
  \, dy \, dt
\end{align}
is satisfied for all $z_h \in Y_h$. To ensure unique solvability of
\eqref{eqn:discrete_variational_form} and to derive related error
estimates we proceed as in the case of a fixed domain \cite{St15}.
For any $u \in X$ we define $w_{uh} \in Y_h$ as the unique solution of the
Galerkin variational formulation
\begin{align}\label{eqn:discrete_auxiliary_variational_form}
  \int_0^T \int_{\Omega(t)} \nu \, \nabla_y w_{uh} \cdot \nabla_y z_h \, dy \, dt
  = \int_0^T \int_{\Omega(t)} \sigma \, \frac{d}{dt} u \, z_h \, dy \, dt
  \quad \mbox{for all} \; z_h \in Y_h
\end{align}
in order to define the discrete norm
\begin{align}\label{eqn:discrete_norm_of_X}
  \| u \|^2_{X_h} := \| u \|^2_Y + \| w_{uh}\|^2_Y \leq
  \| u \|^2_Y + \| w_u \|^2_Y = \|u \|^2_X \quad \mbox{for all} \; u \in X.
\end{align}
Correspondingly, for $u_h \in X_h$ we define $ w_{u_hh} \in Y$, and hence
we can consider, due to $X_h \subset Y_h$, the particular test function
$ z_h := u_h + w_{u_hh} \in Y_h$ to conclude, as in the continuous case,
\[
  b(u_h,u_h+w_{u_hh}) \geq
  \| u_h \|_Y^2 + \| w_{u_hh} \|^2_Y = \| u_h \|^2_{X_h},
  \quad
  \| z_h \|_Y \leq \| u_h \|_y + \| w_{u_hh} \|_Y \leq \sqrt{2} \,
  \| u_h \|_{X_h},
\]
and therefore the discrete inf-sup condition
\begin{align}\label{eqn:discrete_inf_sup_condition}
  \frac{1}{\sqrt{2}} \, \| u_h \|_{X_h} \leq \sup
  \limits_{0 \neq z_h \in Y_h} \frac{b(u_h,z_h)}{\|z_h\|_Y} \quad
  \mbox{for all} \; u_h \in X_h
\end{align}
follows. From \eqref{eqn:discrete_inf_sup_condition} we obtain unique
solvability of the Galerkin variational formulation 
\eqref{eqn:discrete_variational_form}, and due to
\[
  \frac{1}{\sqrt{2}} \, \| u_h \|_{X_h} \leq \sup
  \limits_{0 \neq z_h \in Y_h} \frac{b(u_h,z_h)}{\|z_h\|_Y} =
  \sup \limits_{0 \neq z_h \in Y_h} \frac{b(u,z_h)}{\|z_h\|_Y} 
  \leq \sqrt{2} \, \| u \|_X
\]
we conclude the boundedness of the Galerkin projection
$u_h = G_h u$, i.e.,
\[
  \| G_h u \|_{X_h} = \| u_h \|_{X_h} \leq 2 \, \| u \|_X \quad
  \mbox{for all} \; u \in X .
\]
From this we further obtain
\[
  \| u - u_h \|_{X_h} \leq \| u - z_h \|_{X_h} + \| G_h(z_h-u) \|_{X_h} \leq
  3 \, \| u - z_h \|_X \quad \mbox{for all} \; z_h \in X_h,
\]
i.e., we have Cea's lemma
\begin{align}\label{eqn:Ceas_lemma}
  \| u - u_h\|_{X_h} \leq 3 \inf \limits_{z_h \in X_h} \|u - z_h\|_X .
\end{align}
When using standard finite element error estimates
\cite{BrennerScott:2008,St08}
for piecewise linear approximations we finally conclude the error estimate 
\[
\| u - u_h \|_Y \, \leq \, c \, h \, |u|_{H^2(Q)}
\]
when assuming $u \in H^2(Q)$, see \cite{St15} for the case of a fixed
domain.

The Galerkin space-time finite element variational formulation
\eqref{eqn:discrete_variational_form} results in a huge linear system
of algebraic equations, which has to be solved efficiently, and in parallel.

	\section{Numerical experiments}\label{sec:numerical_experiments}
In this section we provide some numerical results in order to illustrate
the applicability, the accuracy and the efficiency of the proposed approach.

We consider the electric motor as shown in Fig.~\ref{Bild Motor}, where both
the rotor and the stator are made of laminated iron sheets, with $16$ magnets within the rotor
and $48$ coils within the stator. Between the rotor and the stator there is a
thin air gap, and also at the ends of the magnets air pockets are included.
The material parameters for the electric conductivity $\sigma$ and the magnetic reluctivity $\nu$ for the different materials are given in Table~\ref{table:material_paramter}. Note that the electric conductivity for iron and for the coils as given in Table \ref{table:material_paramter} are
chosen to be zero, since the materials in the electric motor are laminated.
Moreover, we account for saturation of the ferromagnetic material, thus the reluctivity $\nu_{\text{iron}}$ in iron is a nonlinear function of the magnitude of the magnetic flux density $|B| = |\nabla u|$. Hence, the variational problem
\eqref{eqn:discrete_variational_form} is a nonlinear problem.
The nonlinear reluctivity is computed from a spline interpolation of
given discrete values for the $BH$-curve representing the relationship between magnetic flux density $B$ and magnet field strength $H$ in a ferromagnetic material. It follows from physical properties of $BH$-curves that the corresponding reluctivity function $\nu_{\text{iron}}$ satisfies a Lipschitz and strong monotonicity condition, see \cite{PechsteinJuettler2006}.
In the coils we are given a three-phase alternating current with an amplitude of 1555A, from which the impressed current density $j_i$ is obtained after dividing by the coil area. The magnetization $M$ in the permanent magnets is constant for each of the magnets and for each pair of magnets (see Fig. \ref{Bild Motor}) points alternatingly inwards or outwards. The magnetization $M$ is then given by the unit vector representing a magnet's magnetization direction multiplied with the remanence flux density $B_R = 1.216$.

The motor is pulled up in time, where the rotation of the rotating parts,
i.e., the rotor, the magnets and the air around the magnets, is already
considered within the mesh for a $90$ degree rotation. As before, the time
component is treated as the third spatial dimension with a time span
$(0,T)$, $T=0.015$ seconds. Note that this corresponds to a rotational
speed of $1000$ rotations per minute. Moreover, $30$ time slices are
inserted in order to have a good temporal resolution, where the mesh is
completely unstructured within the time slices, see Fig.~\ref{fig:motor_body}.
The space-time finite element mesh consists of $978 689$ tetrahedral
finite elements, and $333 288$ nodes.

\begin{table}
  \caption{Material parameter to desribe the electric motor.}
  \label{table:material_paramter}
\begin{center}
\begin{tabular}[tbhp]{ |ccc| } 
 \hline
 material & $\sigma$ & $\nu$ \\
 \hline
 air & 0 & $10^7 / (4 \pi) $  \\ 
 coils & 0 & $10^7 / (4 \pi)$ \\
 magnets & $10^6$ & $10^7 / (4.2 \pi)$ \\ 
 iron & 0 & $\nu_{\text{iron}}(|\nabla u|)$ \\  
 \hline
\end{tabular}
\end{center}
\end{table}

We solve the resulting system in parallel, using a mesh decomposition
method provided by the finite element library Netgen/NGSolve \cite{netgen}.
For our purpose, MPI parallelization is used, however the computations are
done on one computer with 384 GiB RAM and two Intel Xenon Gold 5218 CPU’s with 20 cores each.
For the solution of the nonlinear problem we use Newton's method with damping where the linearized system of every Newton step is solved with
GMRES supported by PETSc \cite{DaPaKlCo2011}.

In a first numerical experiment, we solve a linear approximation to the nonlinear problem at hand by replacing the nonlinear magnetic reluctivity function $\nu_{\text{iron}}(|\nabla u|)$ by a constant $\nu_1 = 10^7/(5100 \cdot 4 \pi)$, which is a realistic approximation when saturation of the material does not occur. The solution to this linear problem including homogeneous initial conditions is displayed in Figure~\ref{fig:sol_lin_prob_initial_val}. Here, we made cross sections in
temporal directions at specific time points. In Table~\ref{table:speedup_table_linprob_mumps} and
Table~\ref{table:speedup_table_linprob_gmres} the computational times with
respect to the number of cores are given for the linear problem with
homogeneous initial conditions. Next we used the solution of this linearized problem as initial guess in Newton's method for solving the nonlinear problem with the reluctivity function $\nu_{\text{iron}}=\nu_{\text{iron}}(|\nabla u|)$. The solution of the initial value problem for different points in time is depicted in Figure \ref{fig:sol_motor_zero_initial_conditions}.
Table~\ref{table:speedup_table_nonlinprob_gmres}
shows the computational time with respect to the number of cores of the
Newton method stopped after 100 iterations. The initial value for the
Newton method is the solution of the linear problem with zero initial
conditions, as visualized in Fig.~\ref{fig:sol_lin_prob_initial_val}.
Moreover, the solutions for the nonlinear problem with periodic temporal conditions
are displayed in Fig.~\ref{fig:sol_motor_periodic_conditions}, but this problem
was not solved in parallel.

\begin{figure}[tbhp]
\begin{minipage}{0.47\linewidth}
\begin{center}
\includegraphics[width=1\linewidth]{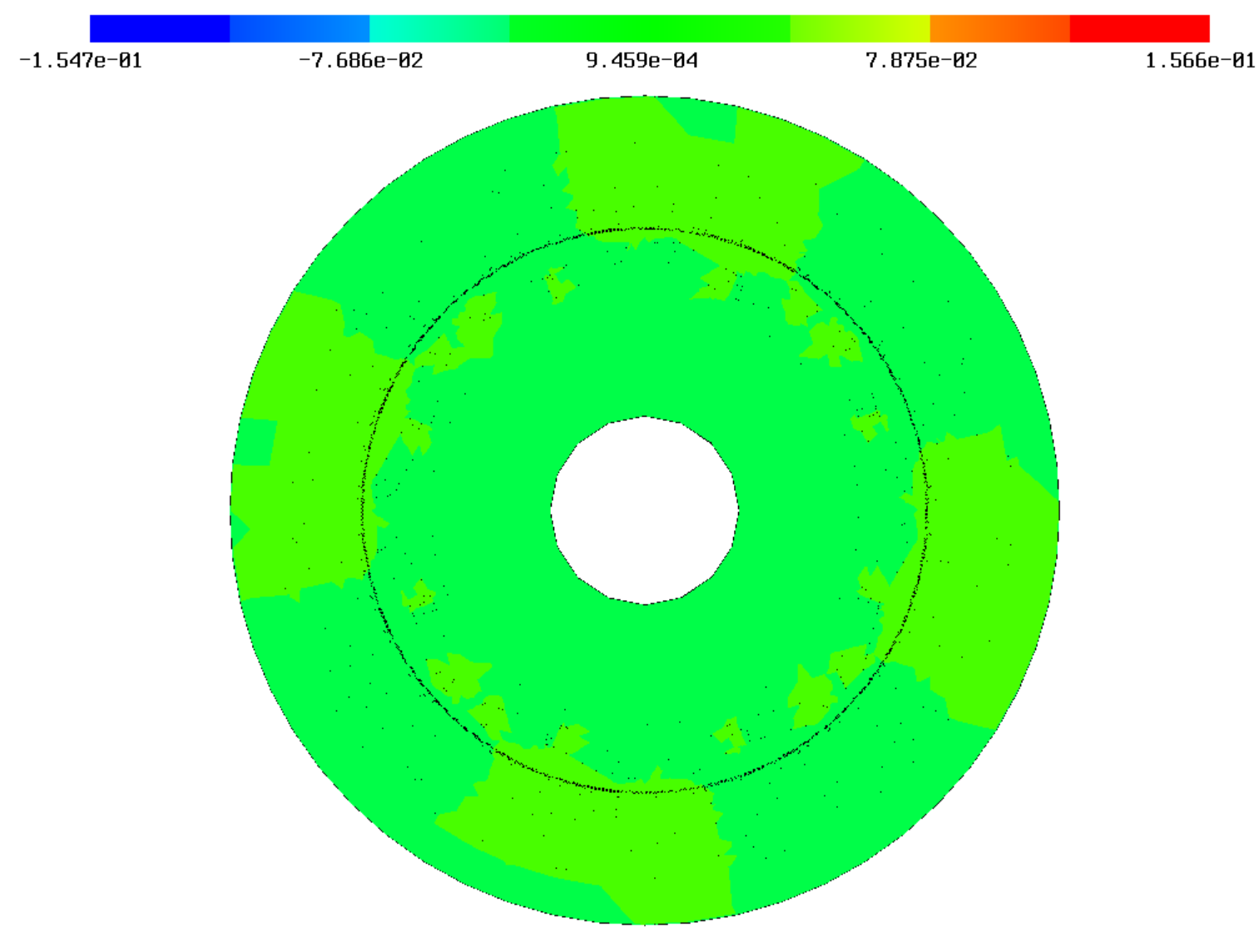} 
\textit{Solution at time $t = 0.0$.}
\end{center} 
\end{minipage}
\hfill
\vspace{0.2 cm}
\begin{minipage}{0.47\linewidth}
\begin{center}
\includegraphics[width=1\linewidth]{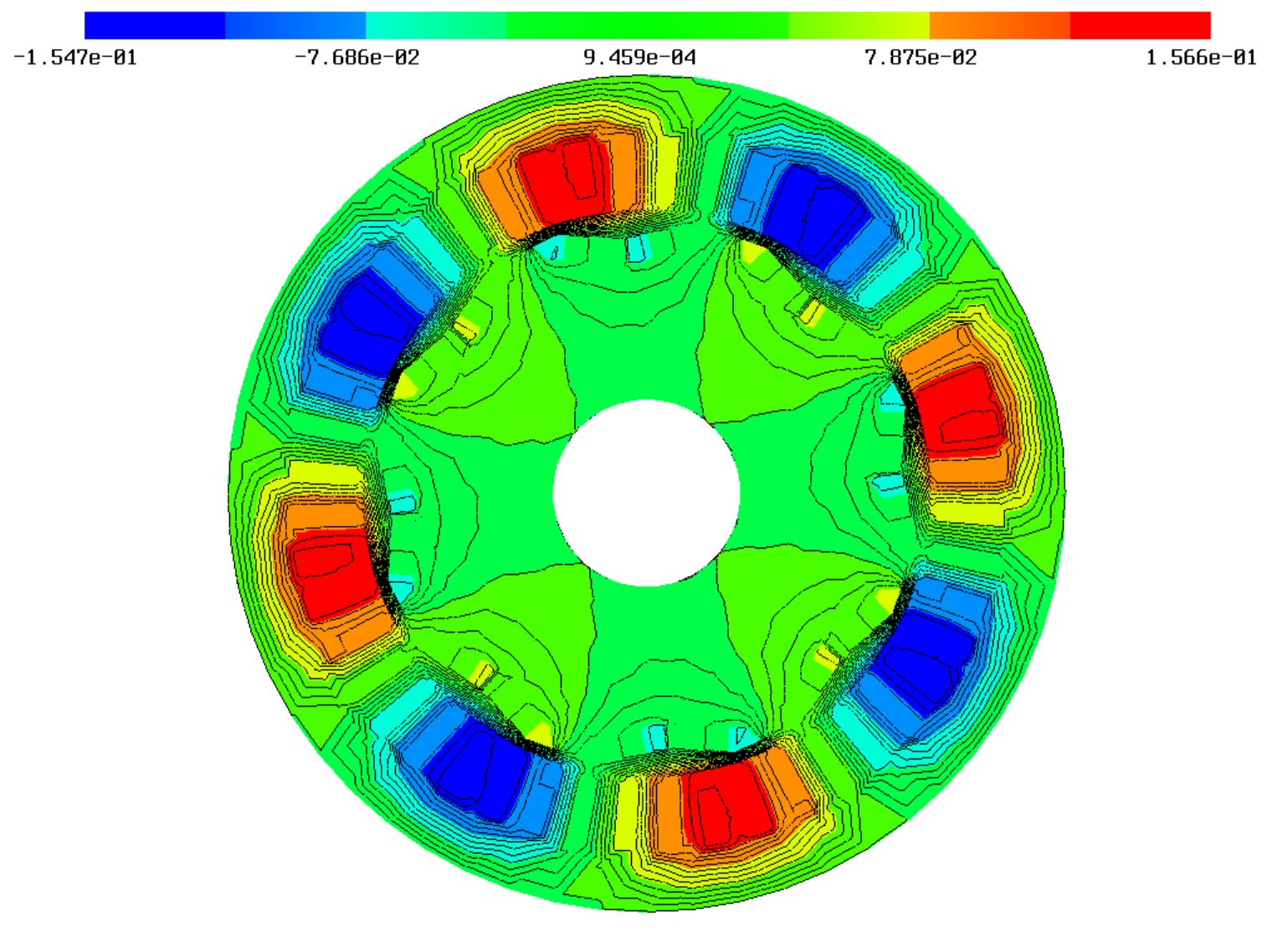} 
\textit{Solution at time $t = 0.0045$.}
\end{center}
\end{minipage}
\vfill
\vspace{0.2 cm}
\begin{minipage}{0.47\linewidth}
\begin{center}
\includegraphics[width=1\linewidth]{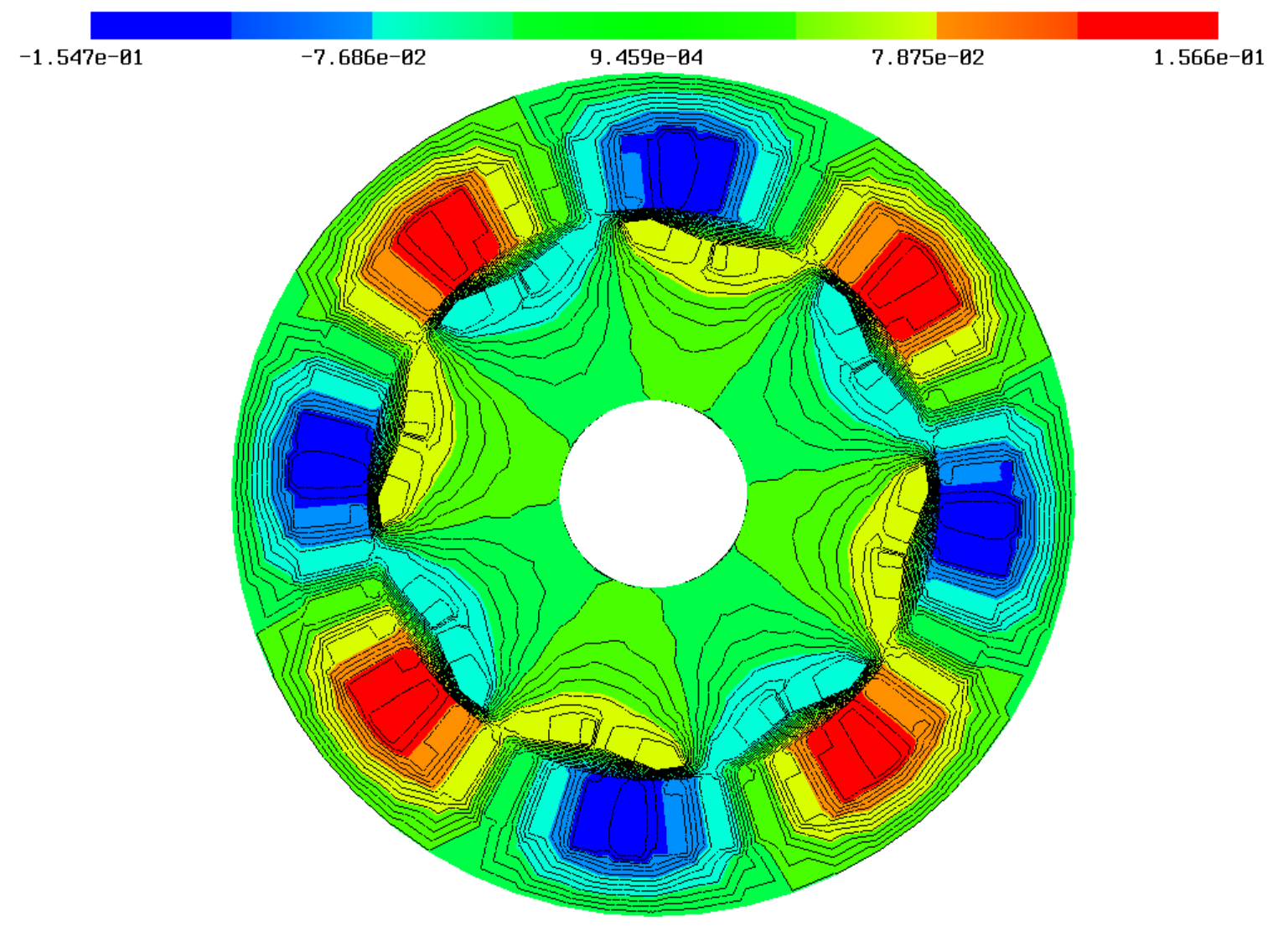} 
\textit{Solution at time $t = 0.009$.}
\end{center}
\end{minipage}
\hfill
\begin{minipage}{0.47\linewidth}
\begin{center}
\includegraphics[width=1\linewidth]{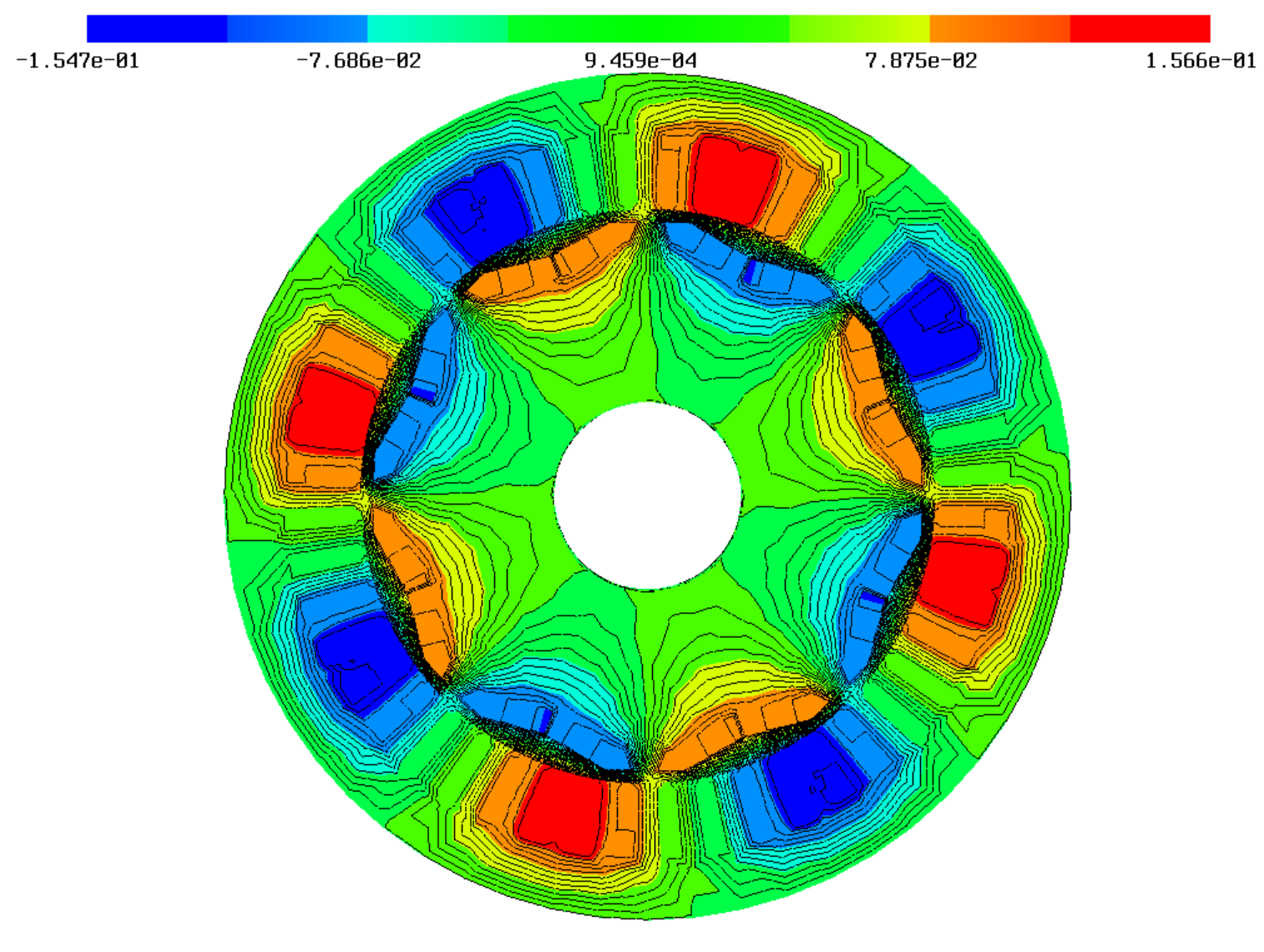} 
\textit{Solution at time $t = 0.015$.}
\end{center}
\end{minipage}
\caption{Cross sections of the solution for specific time points of the linear problem with zero initial conditions, which is considered as the start value for Newton's method.}
\label{fig:sol_lin_prob_initial_val}
\end{figure}

\begin{table}
\caption{Computational times in seconds of the linear problem with homogeneous initial conditions solved with MUMPS provided by PETSc \cite{DaPaKlCo2011}.}
\label{table:speedup_table_linprob_mumps}
\begin{center}
\begin{tabular}[tbhp]{ |c|c|c|c|c|c| } 
 \hline
 number of cores & 1 & 2 & 4 & 8 & 16\\
 \hline
 time in seconds & 14.12 & 12.2 & 10.75 & 9.63 & 10.17 \\ 
 \hline
\end{tabular}
\end{center}
\end{table}

\begin{table}
\caption{Computational times in seconds of the linear problem with homogeneous initial conditions solved with GMRES provided by PETSc \cite{DaPaKlCo2011} up to 1000 iterations.}
\label{table:speedup_table_linprob_gmres}
\begin{center}
\begin{tabular}[tbhp]{ |c|c|c|c|c|c| } 
 \hline
 number of cores & 1 & 2 & 4 & 8 & 16\\
 \hline
 time in seconds & 16.5 & 11.86 & 7.87 & 4.79 & 3.23 \\ 
 \hline
\end{tabular}
\end{center}
\end{table}

\begin{table}
\caption{Computational times in seconds of the nonlinear problem with homogeneous initial conditions solved with GMRES with 250 iterations in every Newton iteration with a maximum of 100 Newton iterations.}
\label{table:speedup_table_nonlinprob_gmres}
\begin{center}
\begin{tabular}[tbhp]{ |c|c|c|c|c|c| }
 \hline
 number of cores & 1 & 2 & 4 & 8 & 16\\
 \hline
 time in seconds & 9952 & 5103 & 2761 & 1463 & 848 \\
 \hline
\end{tabular}
\end{center}
\end{table}

\begin{figure}[tbhp]
\begin{minipage}{0.47\linewidth}
\begin{center}
\includegraphics[width=1\linewidth]{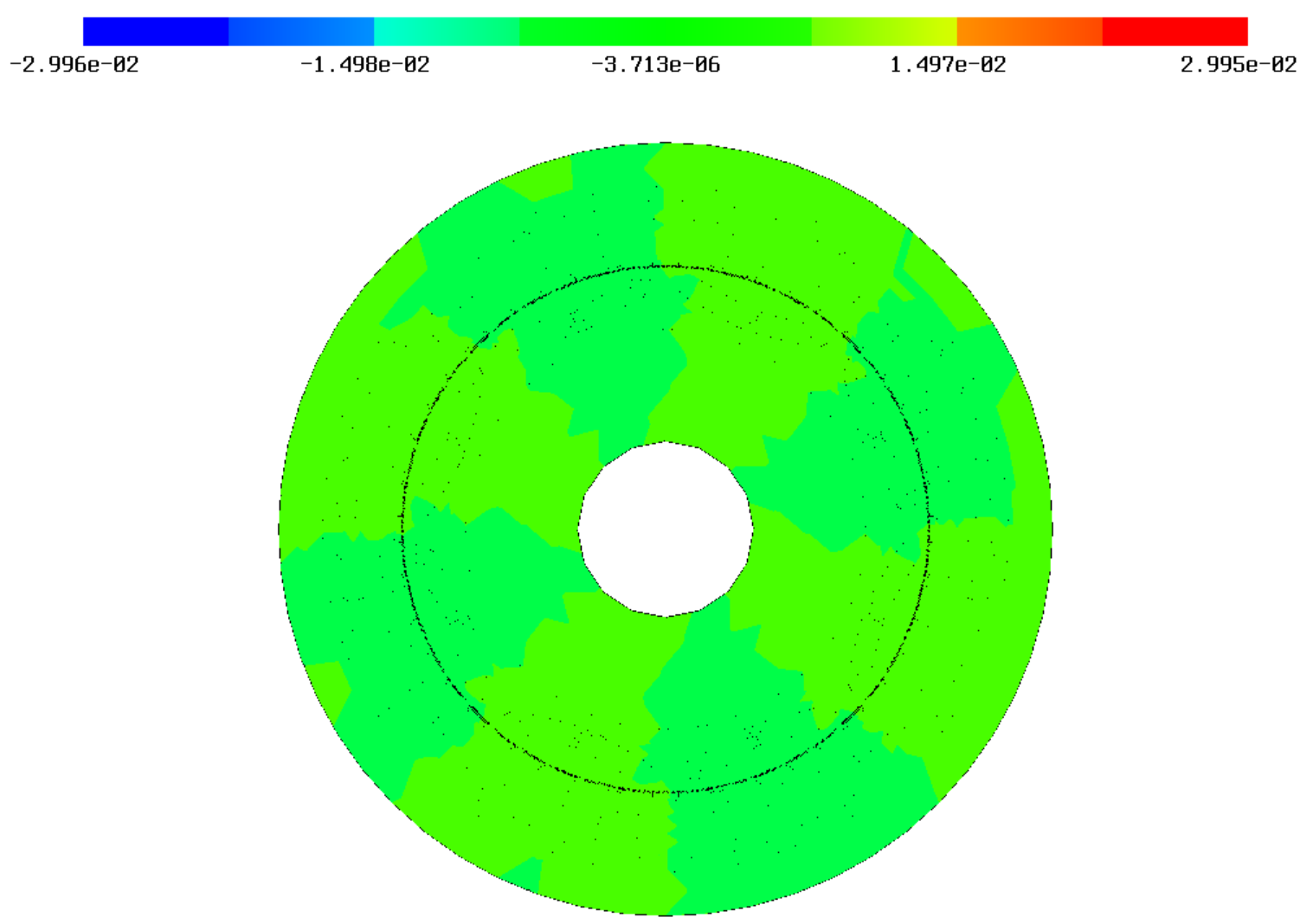} 
\textit{Solution at time $t = 0.0$.}
\end{center} 
\end{minipage}
\hfill
\vspace{0.2 cm}
\begin{minipage}{0.47\linewidth}
\begin{center}
\includegraphics[width=1\linewidth]{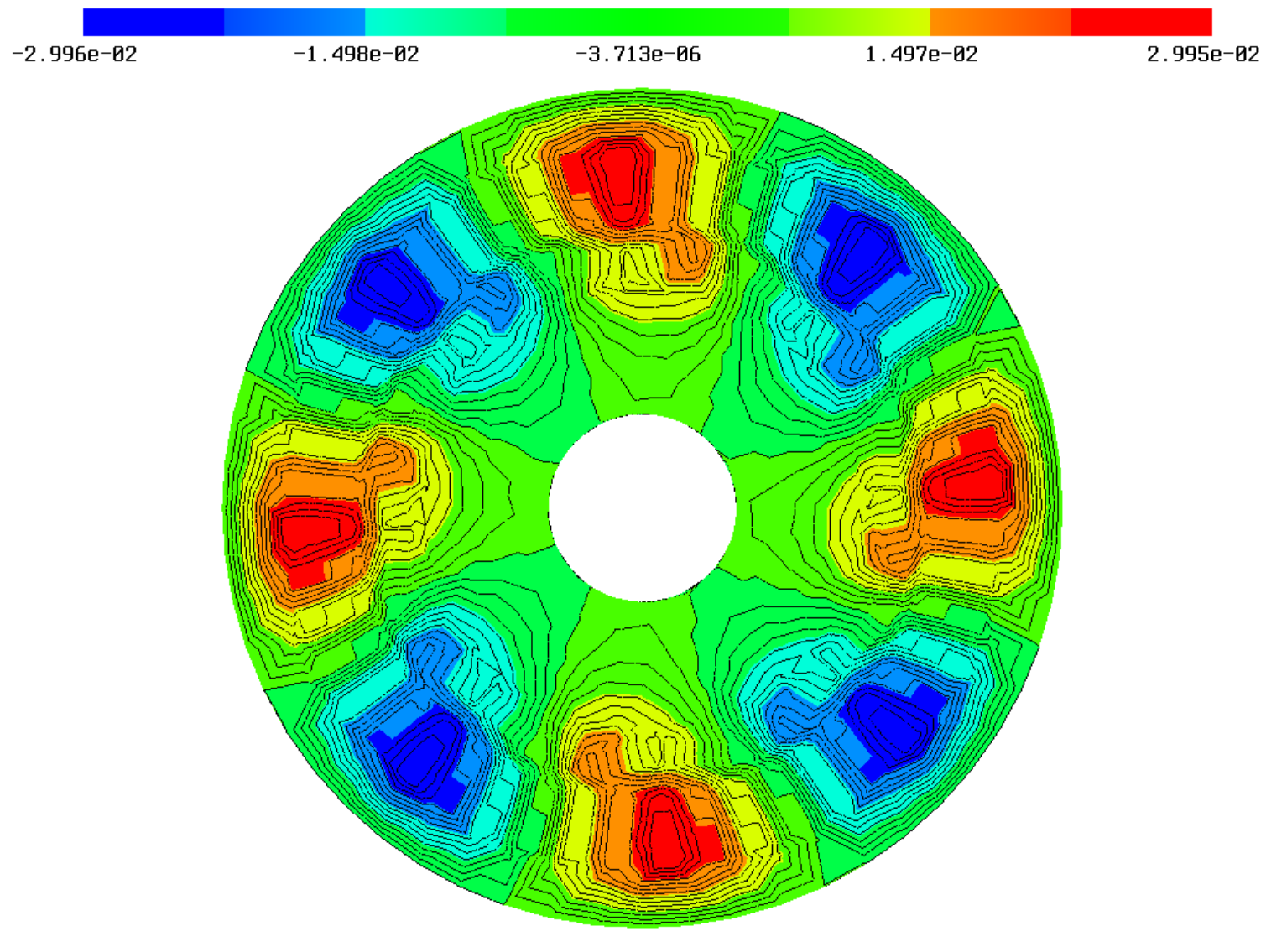} 
\textit{Solution at time $t = 0.0045$.}
\end{center}
\end{minipage}
\vfill
\vspace{0.2 cm}
\begin{minipage}{0.47\linewidth}
\begin{center}
\includegraphics[width=1\linewidth]{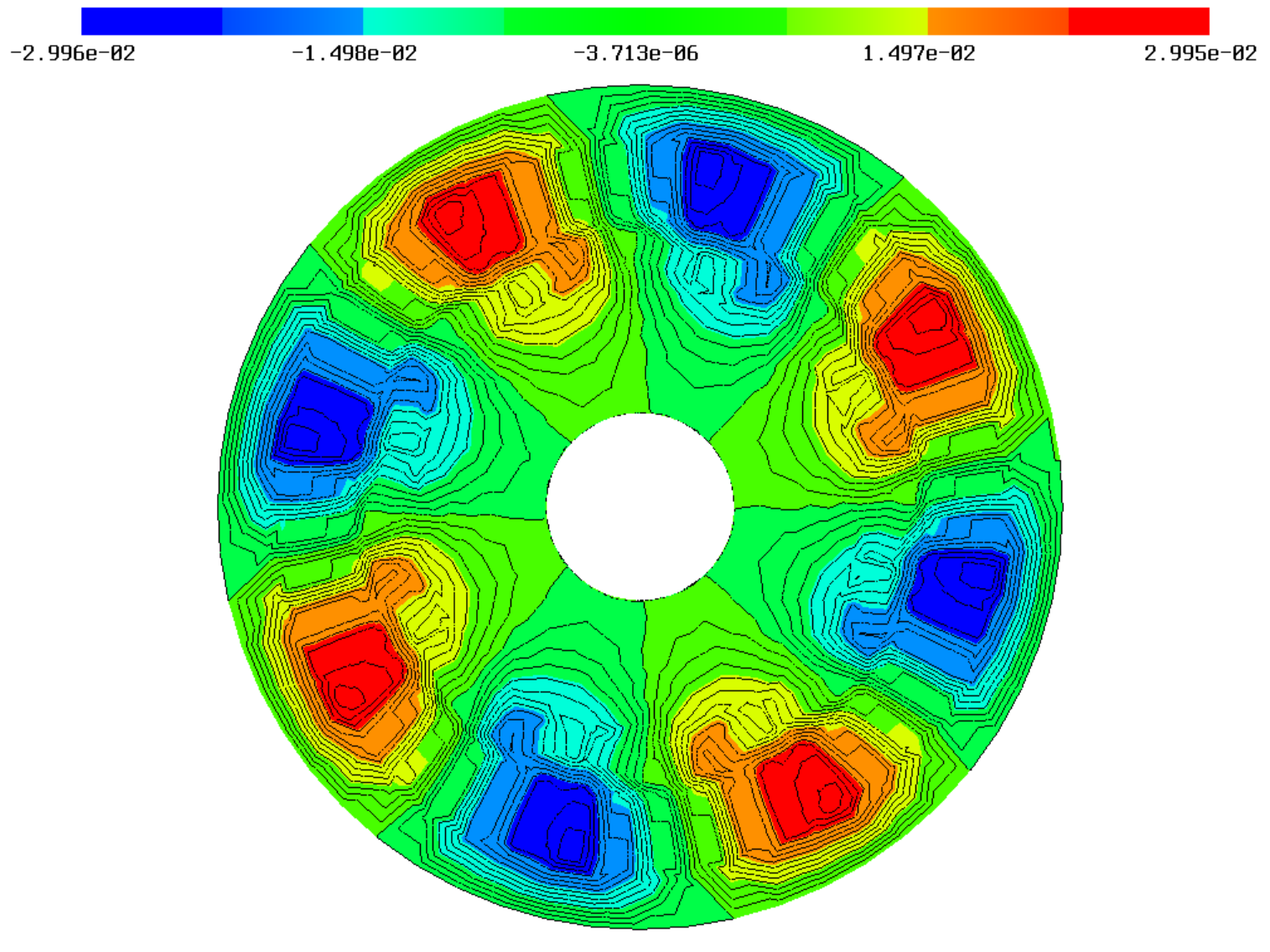} 
\textit{Solution at time $t = 0.009$.}
\end{center}
\end{minipage}
\hfill
\begin{minipage}{0.47\linewidth}
\begin{center}
\includegraphics[width=1\linewidth]{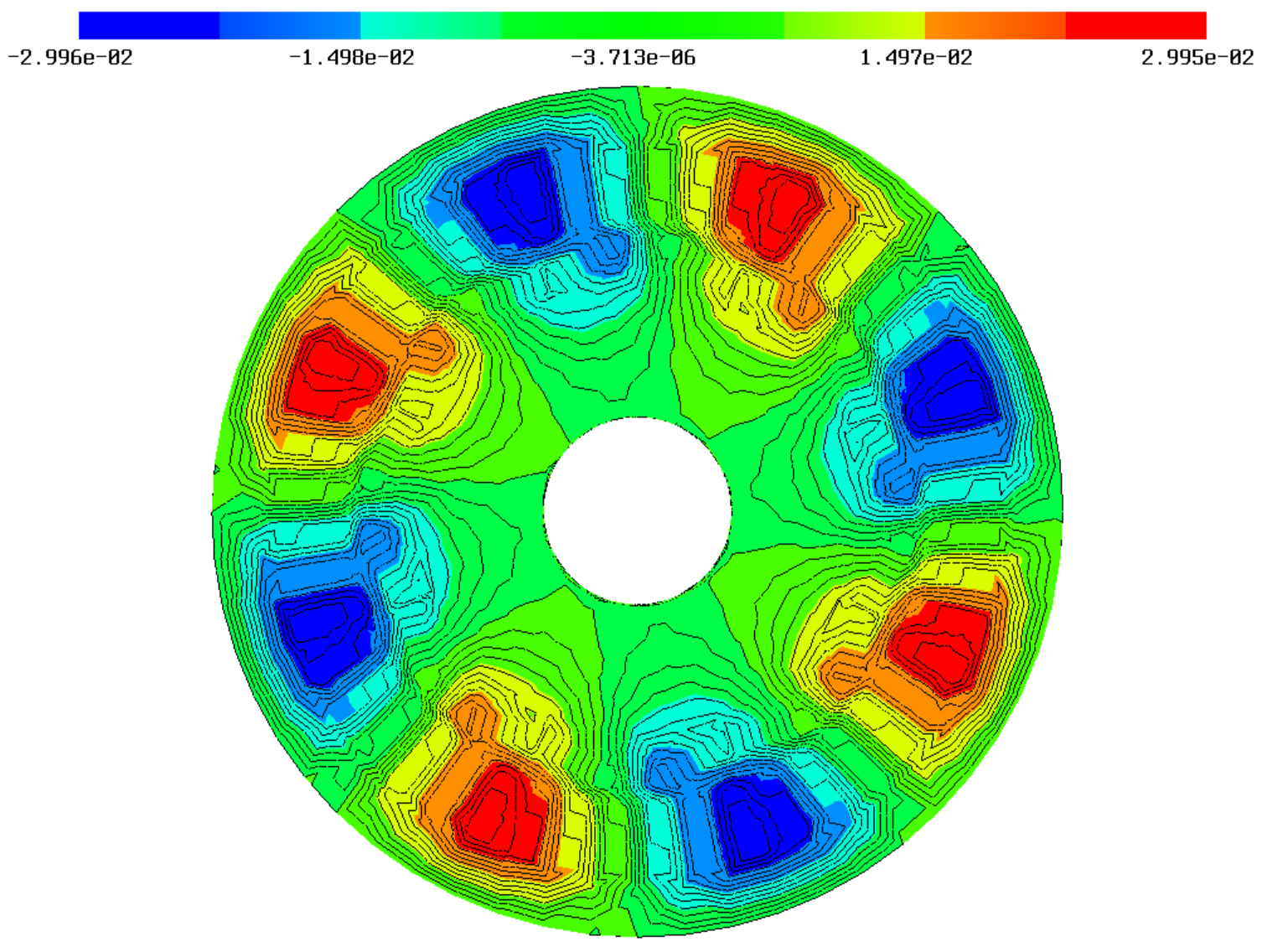} 
\textit{Solution at time $t = 0.015$.}
\end{center}
\end{minipage}
\caption{Cross sections of the solution for specific time points of the nonlinear problem with zero initial conditions.}
\label{fig:sol_motor_zero_initial_conditions}
\end{figure}

\begin{figure}[tbhp]
\begin{minipage}{0.47\linewidth}
\begin{center}
\includegraphics[width=1\linewidth]{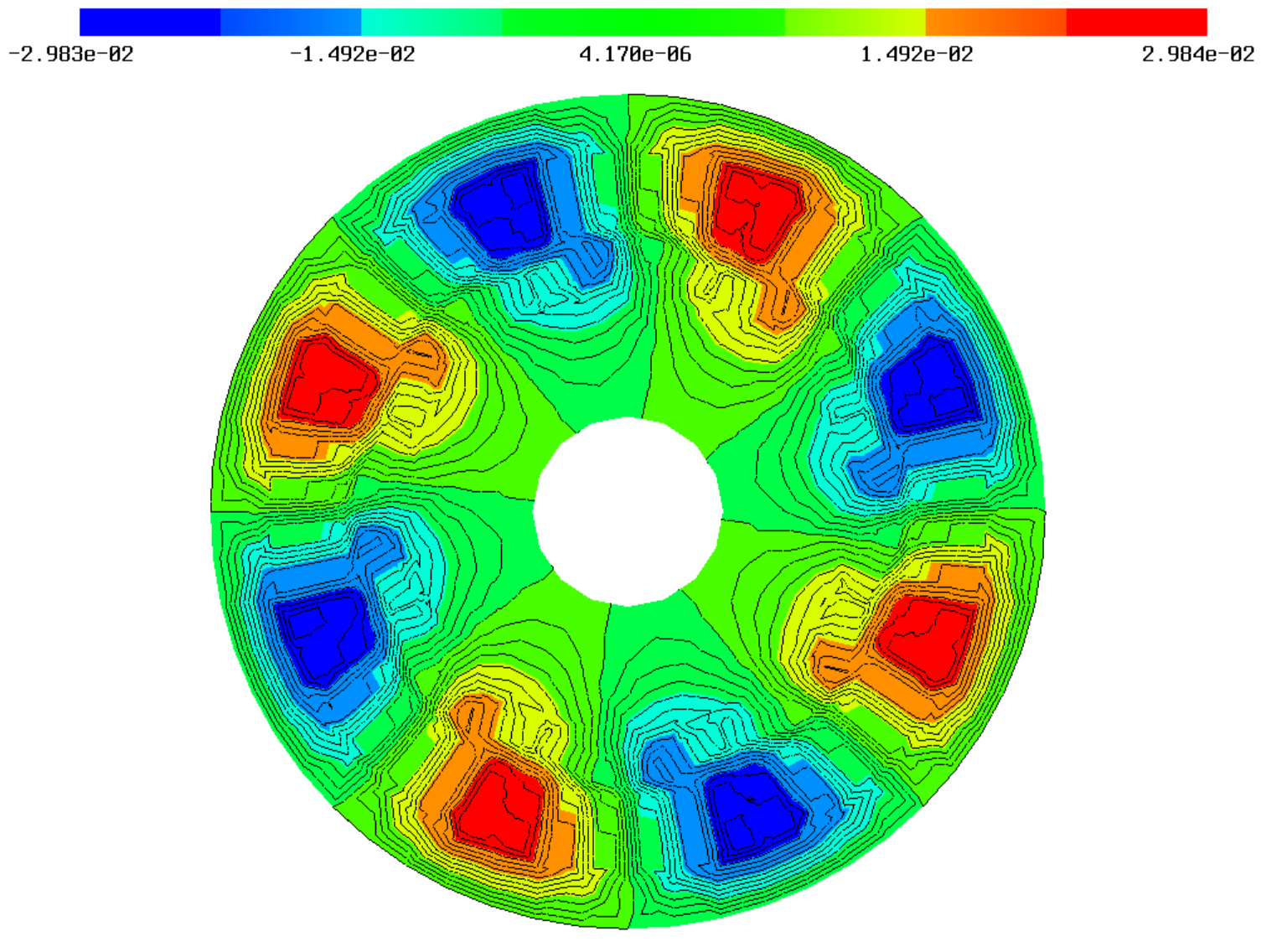}
\textit{Solution at time $t = 0.0$.}
\end{center} 
\end{minipage}
\hfill
\vspace{0.2 cm}
\begin{minipage}{0.47\linewidth}
\begin{center}
\includegraphics[width=1\linewidth]{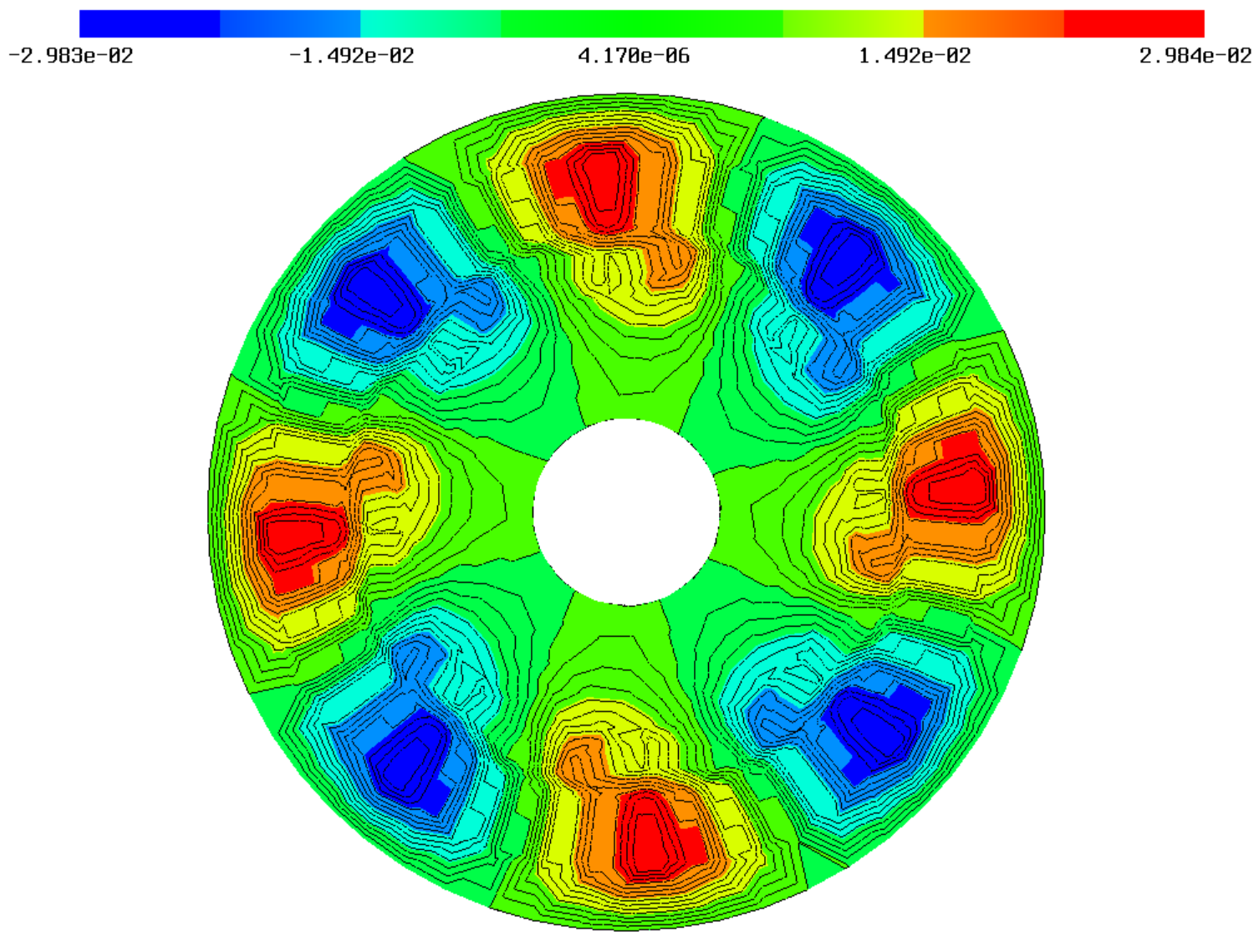} 
\textit{Solution at time $t = 0.0045$.}
\end{center}
\end{minipage}
\vfill
\vspace{0.2 cm}
\begin{minipage}{0.47\linewidth}
\begin{center}
\includegraphics[width=1\linewidth]{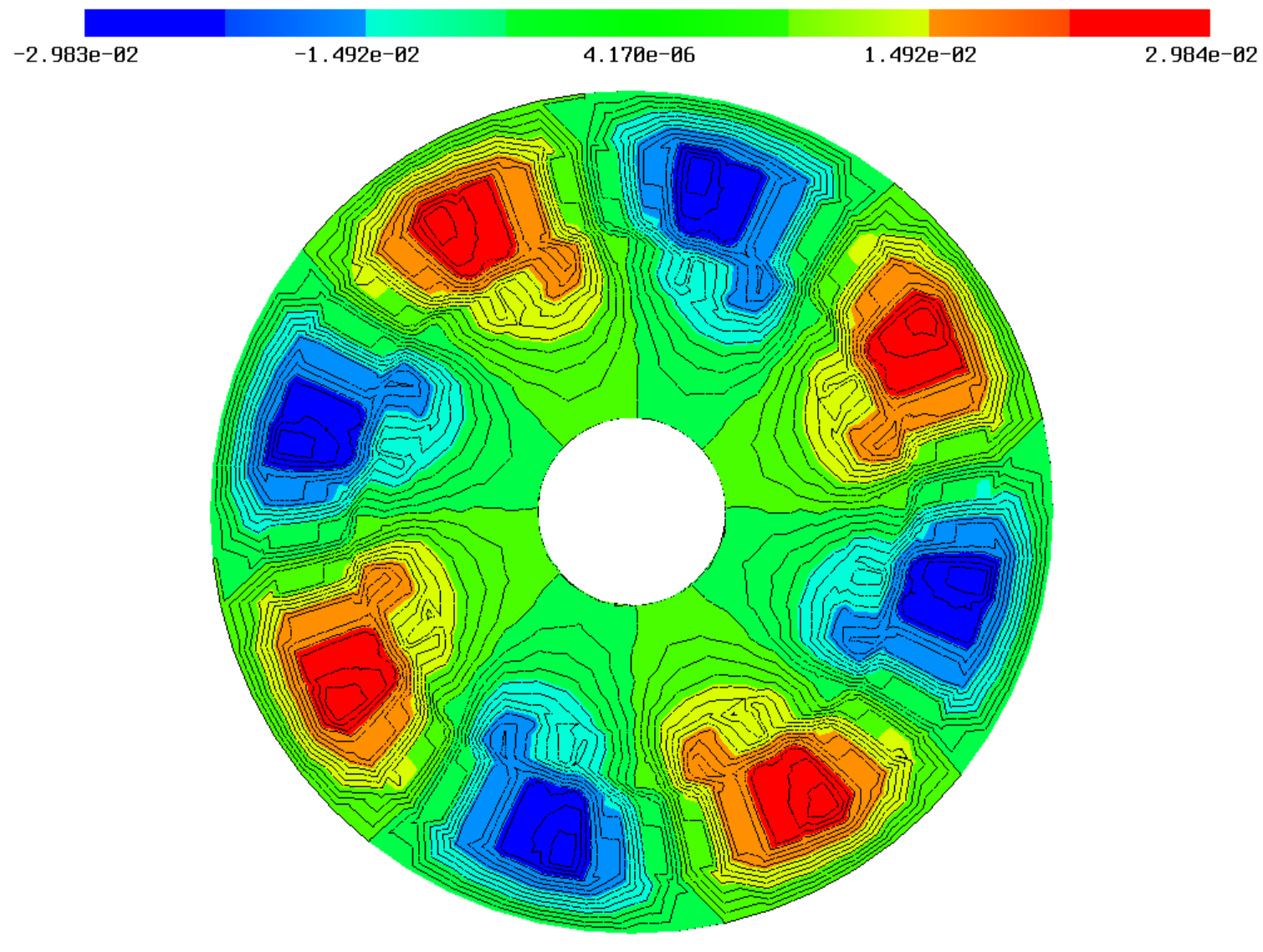} 
\textit{Solution at time $t = 0.009$.}
\end{center}
\end{minipage}
\hfill
\begin{minipage}{0.47\linewidth}
\begin{center}
\includegraphics[width=1\linewidth]{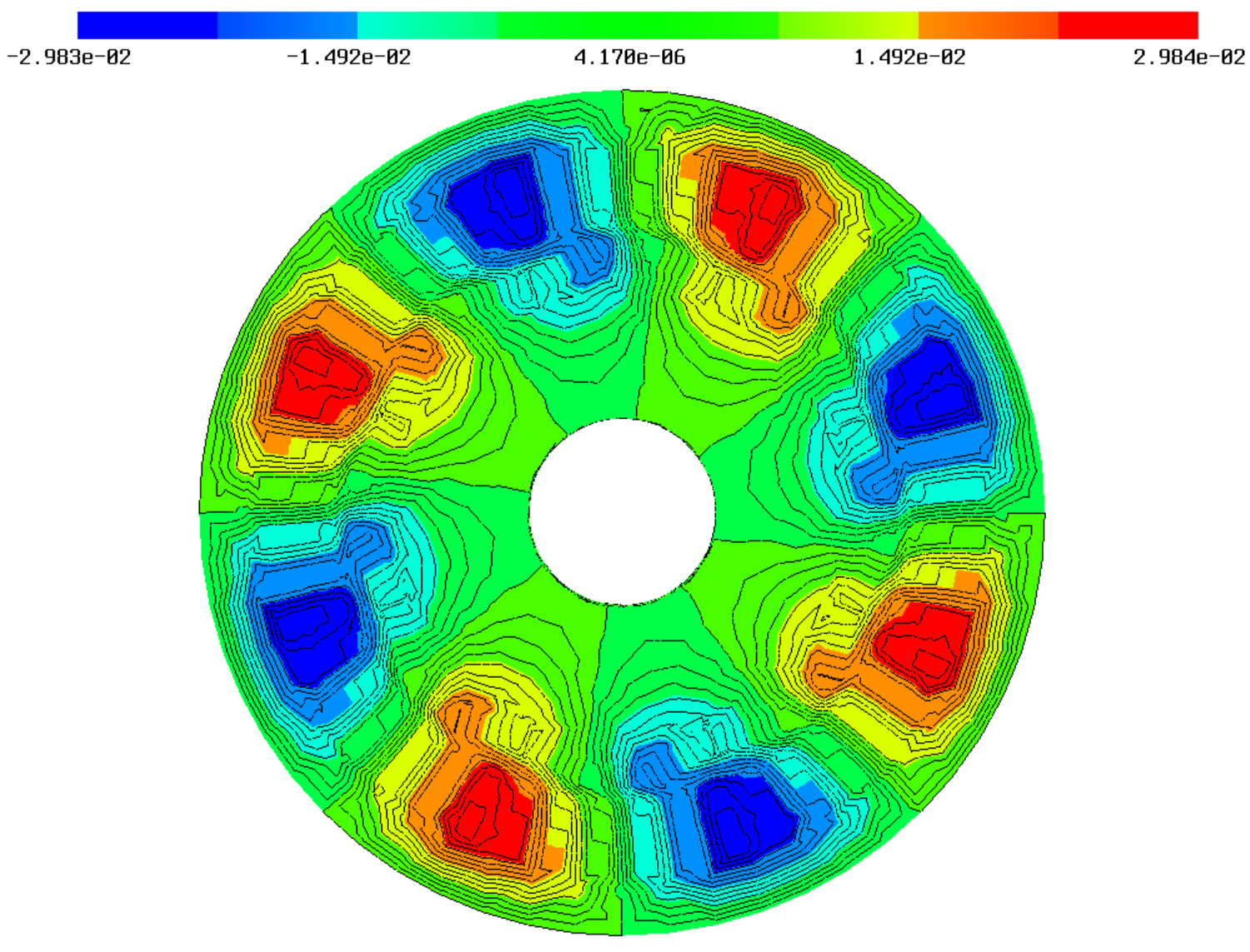}
\textit{Solution at time $t = 0.015$.}
\end{center}
\end{minipage}
\caption{Cross sections of the solution for specific time points of the nonlinear time periodic problem.}
\label{fig:sol_motor_periodic_conditions}
\end{figure}

Finally, we want to illustrate that our space-time method is also applicable to the magnetostatic problem which is obtained from \eqref{eqn:eddy_current_pde} by setting $\sigma = 0$ in the whole computational
domain. This yields a quasi-static problem where the right hand side and the geometry are time-dependent, but no time derivative of the solution is involved.
In this case, the underlying function spaces are
$X = Y = L^2(0,T;H^1_0(\Omega(t))$ 
with their corresponding conforming finite dimensional subspaces
$X_h = Y_h$ as described in Section~\ref{sec:space_time_fe_dis}.
We consider the nonlinear reluctivity $\nu_{\text{iron}}(|\nabla u|)$ and
 solve the resulting system in parallel
using again a damped version of Newton's method within the FE software Netgen/NGSolve \cite{netgen}.
In each Newton step the linearized system is solved with GMRES or MUMPS 
supported by PETSc \cite{DaPaKlCo2011}, where the computational times 
with respect to the number of cores are given in Table 
\ref{table:speedup_table_quasistatic_gmres} and Table 
\ref{table:speedup_table_quasistatic_mumps}, respectively. The solution is 
visualized by making cross sections at certain time points in Fig.~\ref{fig:sol_motor_quasistat_prob}.

\begin{table}
\caption{Computational times in seconds of the nonlinear magnetostatic problem solved with GMRES with 250 iterations in every Newton iteration with a maximum of 100 Newton iterations.}
\label{table:speedup_table_quasistatic_gmres}
\begin{center}
\begin{tabular}[tbhp]{ |c|c|c|c|c|c| } 
 \hline
 number of cores & 1 & 2 & 4 & 8 & 16\\
 \hline
 time in seconds & 2979 & 1618 & 897 & 490 & 292 \\ 
 \hline
\end{tabular}
\end{center}
\end{table}

\begin{table}
\caption{Computational times in seconds of the nonlinear magnetostatic problem
  solved with MUMPS in every Newton iteration within 53 Newton iterations.}
\label{table:speedup_table_quasistatic_mumps}
\begin{center}
\begin{tabular}[tbhp]{ |c|c|c|c|c|c| } 
 \hline
 number of cores & 1 & 2 & 4 & 8 & 16\\
 \hline
 time in seconds & 2166 & 1374 & 977 & 733 & 675 \\ 
 \hline
\end{tabular}
\end{center}
\end{table}

\begin{figure}[tbhp]
\begin{minipage}{0.47\linewidth}
\begin{center}
\includegraphics[width=1\linewidth]{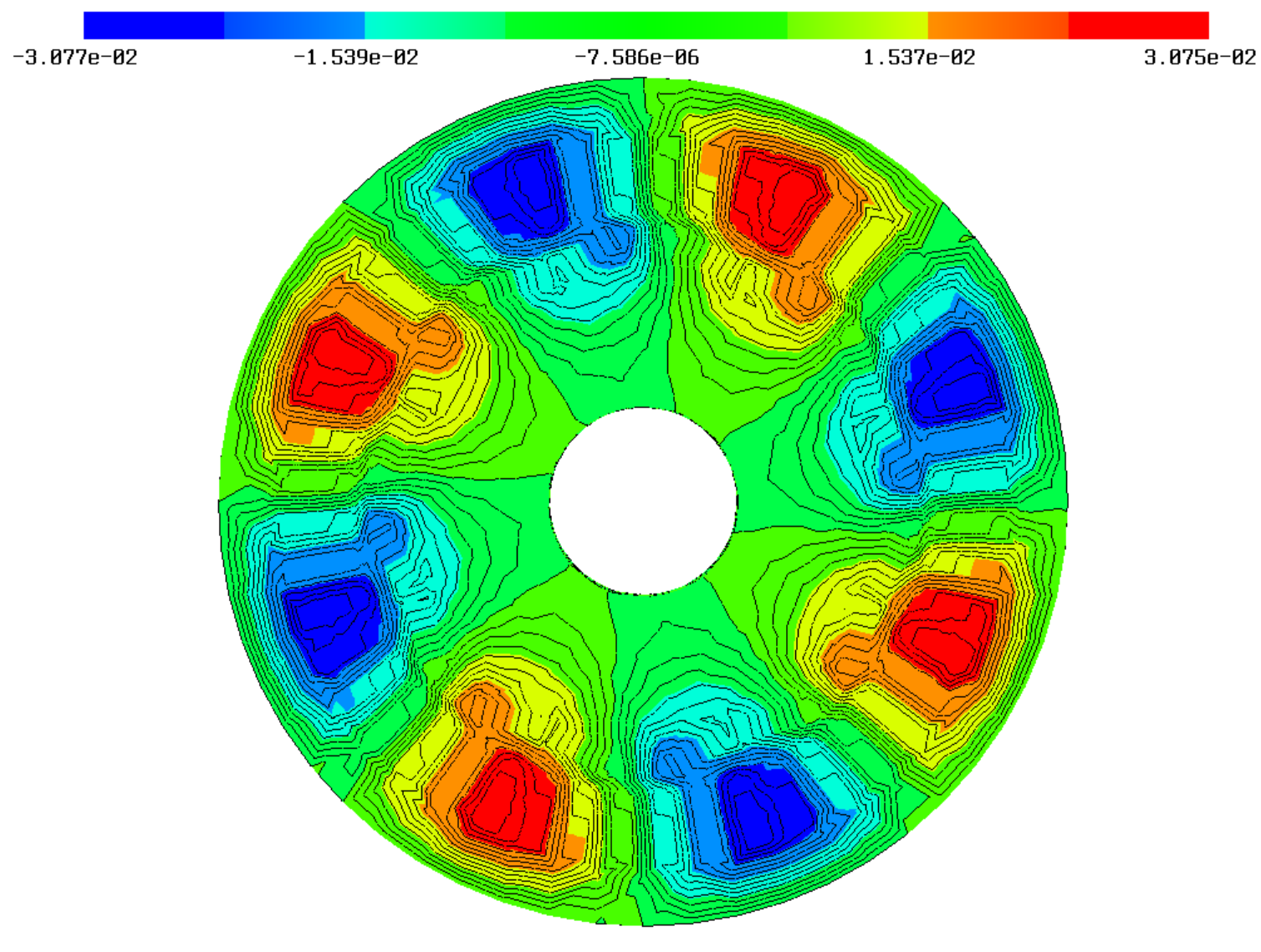}
\textit{Solution at time $t = 0.0$.}
\end{center} 
\end{minipage}
\hfill
\vspace{0.2 cm}
\begin{minipage}{0.47\linewidth}
\begin{center}
\includegraphics[width=1\linewidth]{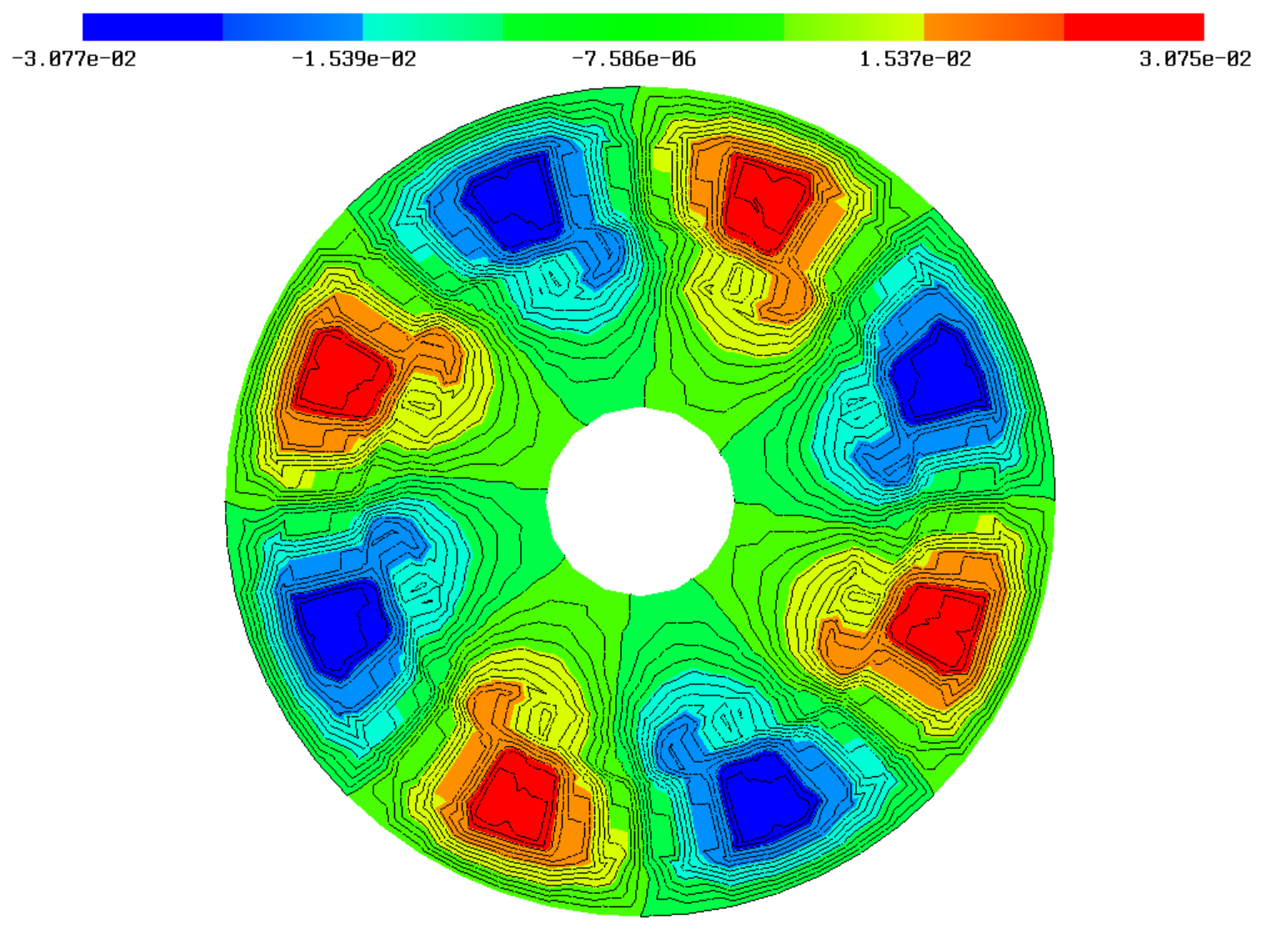} 
\textit{Solution at time $t = 0.0045$.}
\end{center}
\end{minipage}
\vfill
\vspace{0.2 cm}
\begin{minipage}{0.47\linewidth}
\begin{center}
\includegraphics[width=1\linewidth]{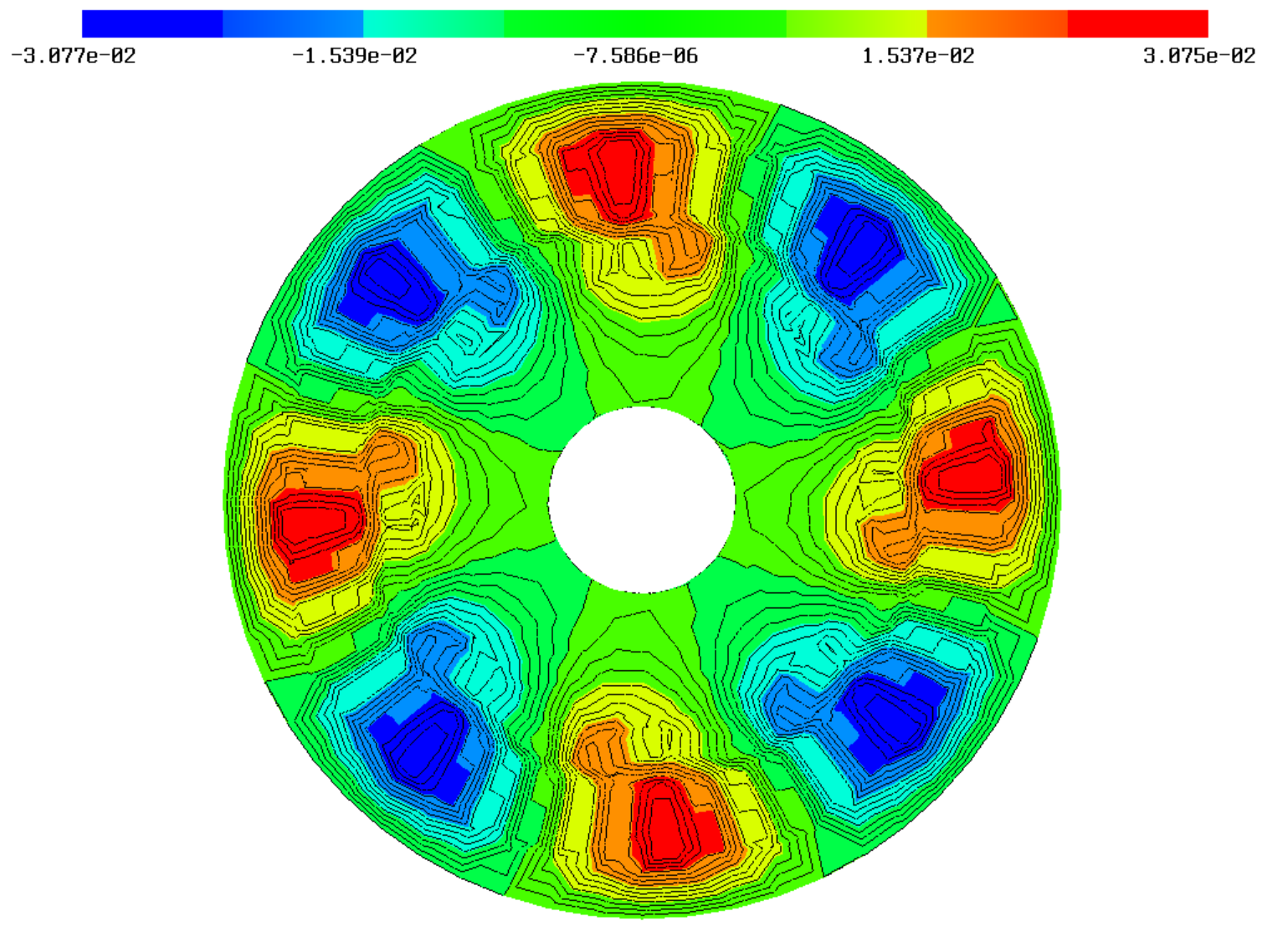} 
\textit{Solution at time $t = 0.009$.}
\end{center}
\end{minipage}
\hfill
\begin{minipage}{0.47\linewidth}
\begin{center}
\includegraphics[width=1\linewidth]{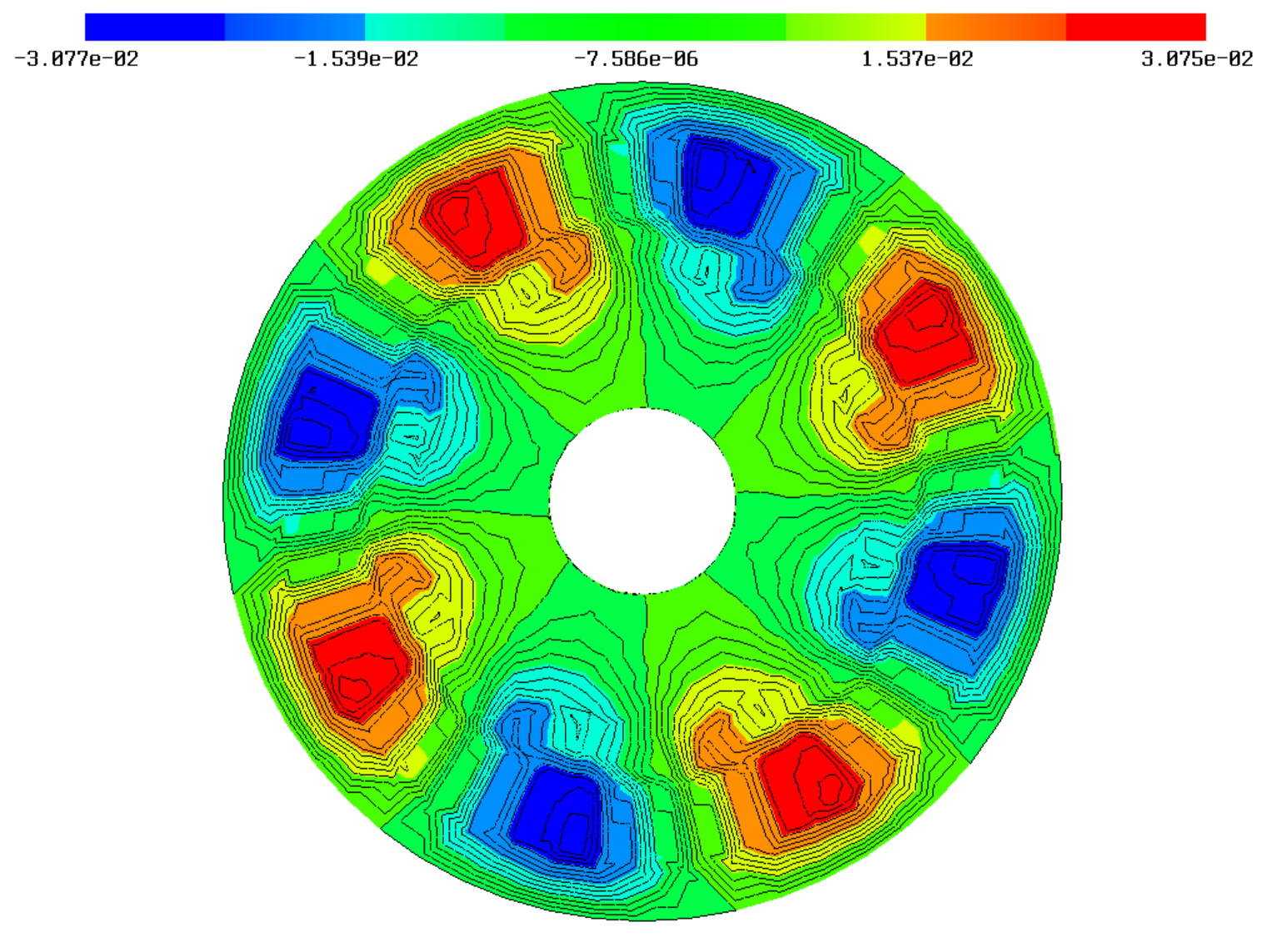}
\textit{Solution at time $t = 0.015$.}
\end{center}
\end{minipage}
\caption{Cross sections of the solution for specific time points of the nonlinear magnetostatic problem.}
\label{fig:sol_motor_quasistat_prob}
\end{figure}

\section{Conclusions}
\label{sec:conclusion}
In this paper we have formulated and analyzed a space-time finite
element method for the numerical simulation of electromagnetic fields
in rotating electric machines. As is the case of a fixed computational
domain we can apply the Babu\v{s}ka--Ne\v{c}as theory to
establish unique solvability. We have presented first numerical
results considering different settings for the mathematical model,
including a quasi-static model, as well as a nonlinear model to
describe the reluctivity. Although  we have applied this approach
already to an example of practical interest, it is still a challenging
task to improve the parallel solver in order to handle problems with
a much higher number of degrees of freedom. In addition to geometric or
algebraic multigrid methods we may use space-time domain
decomposition methods \cite{SteinbachGaulhofer} including
space-time tearing and interconnecting methods 
\cite{PachecoSteinbach}. 

\section*{Acknowledgments}
This work has been supported by the Austrian
Science Fund (FWF) under the Grant Collaborative Research Center
TRR361/F90: CREATOR Computational Electric Machine Laboratory.
P. Gangl acknowledges the support of the FWF project P~32911.
We would like to thank U.~Iben, J.~Fridrich, I.~Kulchytska-Ruchka,
O.~Rain, D.~Scharfenstein, and A.~Sichau
(Robert Bosch GmbH, Renningen, Germany) for the cooperation and
fruitful discussions during this work.

\bibliographystyle{abbrv}
\bibliography{references}

\end{document}